\theoremstyle{plain}
\newtheorem{theorem}{Theorem}[section]
\newtheorem{proposition}[theorem]{Proposition}
\newtheorem{lemma}[theorem]{Lemma}
\newtheorem{corollary}[theorem]{Corollary}
\theoremstyle{definition}
\theoremstyle{remark}
\newcommand{\I}{\mathcal{I}}
\newcommand{\T}{\mathcal{T}}
\newcommand{\R}{\mathbb{R}}
\newcommand{\supp}{\text{supp}}
\newcommand{\free}{\text{empty}}
\newcommand{\rank}{\text{rank}}
\begin{document}

\title[Fair division with multiple pieces]{Fair division with multiple pieces}

 \author{Kathryn Nyman}\thanks{}
\address{Kathryn Nyman: Willamette University} \email{knyman@willamette.edu}
\author{Francis Edward Su}\thanks{}
\address{Francis Su: Harvey Mudd College}\email{su@math.hmc.edu}
\author{Shira Zerbib}
\address{Shira Zerbib: University of Michigan and MSRI} \email{zerbib@umich.edu}


\begin{abstract}
Given a set of $p$ players we consider problems concerning envy-free allocation of collections of $k$ pieces from a given set of goods or chores. We show that if $p\le n$ and each player can choose $k$ pieces out of $n$ pieces of a cake, then there exist a division of the cake and an allocation of the pieces where at least $\frac{p}{2(k^2-k+1)}$ players get their desired $k$ pieces each. We further show that if $p\le k(n-1)+1$ and each player can choose $k$ pieces, one from each of $k$ cakes that are divided into $n$ pieces each, then there exist a division of the cakes and allocation of the pieces where at least $\frac{p}{2k(k-1)}$ players get their desired $k$ pieces. 
Finally we prove that if $p\ge k(n-1)+1$ and each player can choose one shift in each of $k$ days that are partitioned into $n$ shifts each, then, given that the salaries of the players are fixed, there exist $n(1+\ln k)$ players covering all the shifts, and moreover, if $k=2$ then $n$ players suffice. Our proofs combine topological methods and theorems of F\"uredi, Lov\'asz and Gallai from hypergraph theory.  
\end{abstract}

\maketitle

\section{Introduction}

Consider a group of people who are interested in sharing a house and office space. Is there a way to allocate rent on the rooms of the house and office building in such a way that all of the rooms will be rented, and each renter will get their first choice of room in each building?
And given an employer with a pool of potential workers, can they guarantee a small number of workers can be hired to cover all necessary shifts? 

In this paper, we examine several fair division questions pertaining to the allocation of multiple pieces of goods (or bads) to players. Our goal will be to divide multiple goods fairly among a group of people: we seek to divide multiple cakes such that players receive one piece from each cake, we wish to assign shifts on multiple days to workers so that all shift are covered, and we seek to assign rents to rooms in multiple buildings so that players prefer disjoint rooms in each building. We also consider the problem of assigning multiple pieces of a single cake to players. 

Since it is not always possible in these multi-piece allocation problems to make all players in an arbitrary group happy, our results assume a larger initial set of ``potential players'' and guarantee that some fraction of them can be satisfied with a distribution of pieces. In all of our problems, we seek to allocate pieces to players in such a way that they each receive their most preferred set of pieces in a given division, chosen over the set of all possible collections of pieces in that division. This makes the allocation
\emph{envy-free}, as the player would not wish to trade pieces with any other player. It also makes the allocation \emph{Pareto-optimal}, as no shuffling of the pieces among players would make any of the players happier (as they all have their top choice of pieces).

The problem of dividing a single cake among a group of players has been extensively studied (see, e.g. \cite{bramstaylor, robertsonwebb}). More recently, the division of multiple goods or bads has been examined. Cloutier, et.al.  \cite{multicakes} showed that when dividing two cakes into two pieces each, it is not always possible to satisfy the preferences of the two players if each of them chooses one piece of each cake.  
However, if the number of players is increased, or the number of pieces is increased, then it can be guaranteed that there are two players whose choices of pieces in each cake are disjoint. Hence, both these players can be satisfied simultaneously. Note that in this problem, if the number of players is increased,  
or the number of pieces is increased
(but both are not increased together), then there is either a player who does not receive any cake, or a piece of cake that has not been distributed to any player. We can consider these ``left out'' players or pieces as being the price of a disjoint, envy-free distribution of cake, as it may be impossible to satisfy two players otherwise. 

Lebert~et.~al.~\cite{Meunier} extended the work in \cite{multicakes} to show an upper bound on the number of pieces necessary to guarantee that $m$ cakes can be divided in an envy-free manner among two players. Here again, each of the two players receives a disjoint, envy-free set of pieces of the cakes, but there are extra pieces not allocated to any player.

Dual to the notion of dividing goods, one can ask about dividing ``bads'', or chores, which leads to the problem of rental division. In \cite{Rental}, Su showed that a division of rent among rooms in a house can be achieved so that each roommate prefers a different room. In a surprising result, Frick~et.~al.~\cite{SecretHousemate} show that an envy-free rent division can be achieved among $n$ people, even if the preferences of only $n-1$ housemates are known. 

However, when we extend the question to division of rent in multiple houses (say a bedroom house and an office building that are being rented to a set of roommates together), then, as in the case of dividing multiple cakes, it may be impossible to make every player happy simultaneously. Already in the case of two players and two houses with two rooms each, an example where there is no envy-free division of the rent is known  
(see Theorem 3 of \cite{multicakes}). 


We now summarize our results. Our proofs are topological, and combine extensions of KKM-type theorems with tools from hypergraph theory. 

Our first theorem gives a lower bound on the number of players who prefer mutually disjoint pieces in the case where one cake is divided into $n$ pieces and players chooses $k$ pieces each.  We imagine a set of potential players who, when presented with some division of the cake, have a preferred set of $k$ pieces. We strive for divisions in which we can maximize the number of players whose preferred $k$ pieces are disjoint, since those players can be satisfied simultaneously.

\begin{theorem}\label{main2}
Suppose a cake is to be divided into $n$ pieces, 
and there are $p\le n$ hungry players who satisfy the following conditions
\begin{enumerate}
\item in any division of cake into $n$ pieces, each player finds some subset of $k$ pieces acceptable, and
\item player preference sets are closed: a piece that is acceptable for a convergent sequence of divisions will also be acceptable in the limiting division.
\end{enumerate}
Then there exists a division of the cake into $n$ pieces where at least $\lceil\frac{p}{2(k^2-k+1)}\rceil$ players prefer mutually disjoint sets of $k$ pieces.
Moreover, if $p$ divides $n$ then there exists a division of the cake into $n$ pieces where at least $\lceil\frac{p}{k^2-k+1}\rceil$ players prefer mutually disjoint sets of $k$ pieces.
\end{theorem}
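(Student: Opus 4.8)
The plan is to convert the continuous division problem into a combinatorial matching problem via a KKM/KKMS-type theorem, and then to control the size of the matching using Füredi's inequality for fractional matchings in uniform hypergraphs. The target bound $p/(k^2-k+1)$ factors suggestively as $(p/k)\big/\big((k^2-k+1)/k\big)$, and since $(k^2-k+1)/k = k-1+\tfrac1k$ is exactly Füredi's fractional-to-integral matching ratio (attained by the projective plane of order $k-1$), I expect the proof to produce a fractional matching of value about $p/k$ topologically and then round it down with Füredi.

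First I would model the divisions of the cake into $n$ pieces by the simplex $\Delta^{n-1}$, identifying the $i$-th barycentric coordinate with the length of piece $i$. For each player $j$ and each $k$-subset $S\subseteq\{1,\dots,n\}$, let $C_j^S\subseteq\Delta^{n-1}$ be the set of divisions at which $S$ is an acceptable collection of $k$ pieces for player $j$. Hypothesis (2) makes each $C_j^S$ closed, and for fixed $j$ these sets cover $\Delta^{n-1}$ by hypothesis (1). The \emph{hungry} hypothesis supplies the boundary (KKM) condition: on the face $x_i=0$ no player accepts a set containing the empty piece $i$, so $C_j^S$ meets that face only when $i\notin S$. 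I would then feed these $p$ closed covers into a balanced (KKMS-type) theorem to produce a single division $x^*$ at which the players' preferred $k$-sets admit a \emph{balanced subfamily}: a subcollection of preferred sets with positive weights forming a fractional partition of the pieces they cover. Equivalently, at $x^*$ the preference hypergraph $H$, whose vertices are the $n$ pieces and whose edges are the $p$ preferred $k$-sets, has fractional matching number $\nu^*(H)\ge p/k$. As a sanity check, at $k=1$ this step should return a division where all $p$ players prefer distinct single pieces, recovering the known extension of Su's rental-harmony theorem to $p\le n$ players.

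Next I would apply Füredi's theorem, that every $k$-uniform hypergraph satisfies $\nu^*(H)\le\big(k-1+\tfrac1k\big)\,\nu(H)$. Combined with $\nu^*(H)\ge p/k$ this gives an integral matching $\nu(H)\ge\frac{p/k}{(k^2-k+1)/k}=\frac{p}{k^2-k+1}$. The players indexing a maximum matching have pairwise disjoint preferred $k$-sets and so can be satisfied simultaneously in the division $x^*$, yielding the ``moreover'' bound $\lceil p/(k^2-k+1)\rceil$ in the case $p\mid n$, where the balanced family is an exact fractional partition and the value $p/k$ is attained cleanly.

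The hard part will be the topological step: isolating the correct KKMS/balanced theorem, checking its boundary hypotheses from ``hungry $+$ closed,'' and extracting the \emph{quantitative} bound $\nu^*\ge p/k$ rather than a bare nonemptiness statement. I expect the extra factor of $2$ in the general bound to enter precisely here, as a rounding loss: when $p\nmid n$ the $n$ pieces cannot be apportioned evenly among the $p$ players, the guaranteed fractional matching value drops to about $p/(2k)$, and the same Füredi argument then gives $\lceil p/(2(k^2-k+1))\rceil$. Pinpointing exactly where this factor is lost, and confirming it is unnecessary when $p\mid n$, looks like the most delicate bookkeeping in the argument.
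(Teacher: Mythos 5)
Your second half---balanced family $\Rightarrow$ perfect fractional matching $\Rightarrow$ $\nu^*\ge(\text{something})/k$ by the rank bound $\Rightarrow$ F\"uredi $\Rightarrow$ integral matching of size at least that over $k-1+\frac1k$---is exactly the paper's Proposition~\ref{kkmsmatchings}, and your identification of $k-1+\frac1k$ as the relevant ratio is on target. The gap is in the topological step, and it is a real one: there is no KKMS-type theorem that takes $p$ \emph{different} closed covers (one per player) and returns a single point at which a balanced subfamily can be formed using one preferred set from each of several \emph{distinct} players. Your proposal leaves this as a black box, and the intermediate statement you posit for it (``at $x^*$ the hypergraph whose edges are the $p$ players' preferred $k$-sets has $\nu^*\ge p/k$'') cannot be what comes out of the argument: if it were, F\"uredi would immediately give $\nu\ge\frac{p}{k^2-k+1}$ with no factor of $2$ in the general case, contradicting the shape of the theorem. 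The paper's actual mechanism is the Simmons--Su owner device: duplicate each player $\lceil n/p\rceil$ times to obtain a multiset $S'$ of exactly $n$ ``players,'' take a \emph{complete} triangulation of $\Delta_{n-1}$ (every elementary simplex sees all $n$ owners, Lemma~\ref{complete}), label each vertex by its owner's preferred $k$-set, and apply the single-cover discrete KKMS theorem (Theorem~\ref{discretekkms}) to this one induced labeling. The balanced family then consists of $n$ labels attached to \emph{duplicated} players, the matching of size $\lceil n/(k^2-k+1)\rceil$ lives among those $n$ labels, and the factor of $2$ is lost only afterwards, when de-duplicating: dividing by the multiplicity $\lceil n/p\rceil\le 2n/p$ (for $p\le n$) leaves $\frac{p}{2(k^2-k+1)}$ distinct players, with no loss when $p\mid n$. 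So your instinct that the $2$ is a rounding loss from apportioning $n$ among $p$ is right, but it occurs at the de-duplication stage, not as a drop in the fractional matching value.

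A second, smaller omission: because the available multi-player tool is the \emph{discrete} KKMS theorem on a triangulation, one does not directly obtain a division $x^*$; one gets, for each mesh size, an elementary simplex containing $m$ vertices with disjoint labels owned by distinct players. The paper then runs the argument over finer and finer complete triangulations and uses compactness of $\Delta_{n-1}$, finiteness of the set of $m$-subsets of players, and the closedness of preference sets to extract a limit point where a fixed set of $m$ players is simultaneously satisfied. Your proposal invokes a continuous covering theorem and so skips this step, but for the multi-player owner construction the limiting argument is not optional.
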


This theorem is proved in Section \ref{sec:1cake}.

Our second theorem involves dividing $k$ cakes into $n$ pieces each. We call a choice of one piece in each cake a \emph{$k$-piece selection}.  

\begin{theorem}\label{main1}
Suppose that there are $k$ cakes and
$p\le k(n-1)+1$ hungry players, each of whom finds at least one $k$-piece selection acceptable in any division of the cakes into $n$ pieces each. If all player preference sets are closed then there exists a division of the $k$ cakes into $n$ pieces each such that at least $\lceil\frac{p}{2k(k-1)}\rceil$ players prefer mutually disjoint $k$-piece selections.  Moreover, if $p$ divides $k(n-1)+1$ then there exists a division where at least $\lceil\frac{p}{k(k-1)}\rceil$ players prefer mutually disjoint $k$-piece selections. 
\end{theorem}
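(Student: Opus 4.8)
The plan is to translate the geometric conclusion into a matching problem and then feed a topologically produced fractional matching into a theorem of Füredi. Encode a $k$-piece selection $(s_1,\dots,s_k)$, where $s_i$ is the chosen piece in cake $i$, as the edge $\{(1,s_1),\dots,(k,s_k)\}$ of the hypergraph $H$ on the vertex set of ``slots'' $\{(i,u):1\le i\le k,\ 1\le u\le n\}$. Then $H$ is $k$-uniform and $k$-partite (each edge meets every part in one vertex), and two selections are disjoint exactly when the corresponding edges are vertex-disjoint. Hence a family of players with pairwise disjoint selections is precisely a matching in the subhypergraph spanned by their chosen selections, and the goal reduces to a lower bound of the shape $\nu(H)\ge \frac{p}{k(k-1)}$ on the matching number.

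First I would establish the topological core: under the hypotheses there is a division of the $k$ cakes, together with a choice of one acceptable $k$-piece selection per player, so that in the resulting multiset $H$ of $p$ edges every slot $(i,u)$ lies in at most $k$ of the chosen selections. The space of divisions is the product $\prod_{i=1}^{k}\Delta^{n-1}$ of $k$ copies of the $(n-1)$-simplex, of dimension $k(n-1)$, and the hypothesis $p\le k(n-1)+1$ is exactly this dimension plus one, matching the classical envy-free theorem in the case $k=1$ (where the statement reduces to ``some division makes all players prefer distinct pieces''). I would prove it by triangulating the product finely, labeling each triangulation vertex by an (owner, preferred-slot) pair determined by the players' preferences, extracting a suitably balanced elementary simplex from a Sperner/KKM-type lemma on $\prod\Delta^{n-1}$, and passing to the limit of a sequence of refinements, using closedness of the preference sets to retain acceptability in the limiting division. \emph{This step is the main obstacle}: the ordinary KKM/Sperner machinery must be upgraded to the product of simplices and must be made to output the per-slot degree bound rather than merely a single common point.

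Given such a division, assign every chosen selection the weight $1/k$. Since each slot is covered at most $k$ times, the total weight at each vertex is at most $1$, so this is a fractional matching of $H$ of value $p/k$, whence $\nu^{*}(H)\ge p/k$. I then invoke Füredi's theorem for $k$-partite $k$-uniform hypergraphs, which bounds the fractional-to-integral gap by $\nu^{*}\le (k-1)\,\nu$. Combining, $\nu(H)\ge \nu^{*}(H)/(k-1)\ge \frac{p}{k(k-1)}$, and a maximum matching exhibits that many players whose chosen selections are pairwise disjoint. This is the $k$-partite analogue of the $k$-uniform bound $\nu^{*}\le (k-1+\tfrac{1}{k})\,\nu$ responsible for the $k^{2}-k+1$ in Theorem~\ref{main2}.

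When $p$ divides $k(n-1)+1$ the counting in the topological step is exact and yields the clean slot-degree bound $k$, giving the stated $\lceil p/(k(k-1))\rceil$. For general $p\le k(n-1)+1$ I would pad the player set up to the critical size $k(n-1)+1$ by duplicating players, apply the divisible case, and note that duplicates share a selection and so never occur together in a matching; the rounding incurred in balancing $p$ players across the critical number of slots is expected to cost a factor at most $2$ in the degree bound, producing $\lceil p/(2k(k-1))\rceil$. The ceilings are automatic, since the number of satisfied players is an integer.
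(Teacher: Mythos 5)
Your reduction to a matching problem in a $k$-partite $k$-uniform hypergraph on the $kn$ slots, the use of F\"uredi's $k$-partite bound $\nu\ge\nu^*/(k-1)$, the duplication of players up to the critical size $k(n-1)+1$, and the compactness/closedness limit are all the right ingredients and match the paper's strategy. But the step you yourself flag as ``the main obstacle'' is a genuine gap, and moreover the statement you propose to prove there is not the one the topological machinery delivers. You ask for a division in which \emph{each of the $p$ players picks a single acceptable selection} and every slot is covered at most $k$ times. Nothing in the KKM/KKMS circle of ideas outputs such a per-slot degree bound on one simultaneous choice per player, and the statement looks false in general: take $p=k(n-1)+1$ identical players. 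The critical division produced by the topology is one at which the (common) preference structure admits a \emph{balanced} family of acceptable selections; if that family consists of roughly $n/(k-1)$ pairwise disjoint selections, then each can absorb at most $k$ players before some slot exceeds degree $k$, for a total capacity of about $kn/(k-1)$, which is smaller than $p\approx kn$ once $k\ge 3$. So the fractional matching of value $p/k$ that you want to feed into F\"uredi cannot be obtained this way.

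What the paper actually proves at this point (Theorem~\ref{komiyalite}, due to Komiya, and its discrete form, Proposition~\ref{komiyadiscrete}) is different in kind: one takes a \emph{complete} triangulation $T$ of $P(n,k)$ (Lemma~\ref{complete}), so that every elementary simplex carries one vertex per owner, labels each vertex of $T$ by the $k$-tuple its owner prefers at that division (hungriness gives $\ell_i(v)\in\supp(v_i)$), and concludes that some elementary simplex $Q$ has a \emph{balanced} set of labels, i.e.\ a perfect fractional matching on $V_1\sqcup\dots\sqcup V_k$. By Lemma~\ref{rank} this gives $\nu^*\ge n$ (not $p/k$), and F\"uredi's $k$-partite bound then yields $n/(k-1)$ vertices of $Q$ with pairwise disjoint labels, hence that many \emph{distinct} owners; dividing by the duplication multiplicity $\lceil (k(n-1)+1)/p\rceil$ produces the stated $p/(2k(k-1))$ and $p/(k(k-1))$ bounds. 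Two smaller corrections: your claim that ``duplicates share a selection and so never occur together in a matching'' is not right, since copies of an indifferent player may pick different, even disjoint, selections --- the correct fix is to divide the size of the matching by the multiplicity, as above; and the factor of $2$ comes from the ratio of the two ceilings in that division, not from a looser degree bound in the topological step.
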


We prove this Theorem in Section \ref{sec:kcakes}.

Our final fair division result gives conditions under which an employer can cover all necessary work shifts on $k$ days with a small number of employees. In this scenario, an employer seeks to divide $k$ work days into $n$ shifts each in such a way that s/he is able to cover all of the shifts with a limited number of employees. We prove the following.

\begin{theorem}\label{main3}
Suppose that a set of $p\ge k(n-1)+1$ employees satisfies the following conditions: 
\begin{enumerate}
\item For any partition of $k$ days into $n$ shifts each, every employee finds at least one selection of $k$ shifts, one in each day, acceptable.
\item The employees prefer empty shifts, if available. 
\item The employees' preference sets are closed.   
\end{enumerate}
Then there exists a partition of $k$ days into $n$ shifts each for which a subset of at most $n(1+\ln k)$ employees cover all the $kn$ shifts. Moreover, if $k=2$ there exists a subset of $n$ employees that cover all the $2n$ shifts. 
\end{theorem}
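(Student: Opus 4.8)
The plan is to split the argument into a topological step that locates a good partition and a hypergraph step that prunes the set of employees down to the claimed size. I would parametrize partitions of the $k$ days into $n$ shifts each by the product of simplices $X=(\Delta^{n-1})^k$, whose dimension is exactly $k(n-1)$; the hypothesis $p\ge k(n-1)+1$ thus makes the number of employees at least $\dim X+1$, the size of a KKM/Sperner certificate in $X$. For a partition $D\in X$ and an employee, closedness and condition~(1) supply a preferred $k$-shift selection (an element of $[n]^k$, one shift per day), while condition~(2), that employees prefer empty shifts, plays the role of the boundary condition: on the face of $X$ where a given shift is empty, any employee owning a nearby vertex prefers a selection that uses that shift.

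The heart of the proof is the topological claim that \emph{there is a partition $D^*\in X$ at which the preferred selections of the employees cover all $kn$ shifts}. I would obtain this by a product (polytopal) Sperner argument, equivalently a KKM argument, on $X$: triangulate $X$ finely, let each vertex be owned by an employee who labels it by a shift of their preferred selection, use condition~(2) to force the labeling to be admissible on the boundary of $X$, and extract a fully labeled cell. Condition~(3) and a compactness/convergence argument then pass to a limit partition $D^*$ at which every shift lies in some employee's preferred selection. The inequality $p\ge k(n-1)+1=\dim X+1$ is exactly what guarantees enough owners, and hence enough labels, to realize a full cover rather than a partial one.

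The hypergraph step is then short, and is where the stated bounds appear. At $D^*$ form the $k$-partite $k$-uniform hypergraph whose vertex classes are the $n$ shifts of each day and whose edges are the employees' preferred selections; covering all shifts with as few employees as possible is precisely a minimum set cover on the ground set of $kn$ shifts by sets of size $k$. The fractional cover number is now forced: since each selection meets each day in exactly one shift, summing the cover constraints over a single day gives $\tau^*\ge n$, while summing the matching constraints over a single day gives $\nu^*\le n$, so by LP duality $\tau^*=\nu^*=n$ at $D^*$. Lovász's theorem on the ratio of integral to fractional covers (sets of size at most $k$) then yields an integral cover by at most $n(1+\ln k)$ employees. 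For $k=2$ the selection hypergraph is a bipartite graph with no isolated vertex, so König's theorem gives $\nu=\nu^*=n$ and Gallai's identity $\rho=|V|-\nu=2n-n=n$ produces an edge cover, i.e.\ a set of exactly $n$ employees covering all $2n$ shifts.

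The hard part will be the topological step. The difficulty is not in the abstract existence of a KKM/Sperner point but in arranging the boundary data on the product of simplices so that ``prefer empty shifts'' forces a labeling whose fully labeled cell certifies a cover of \emph{all} $kn$ shifts, and in verifying that $\dim X+1$ owners are genuinely enough to realize every shift simultaneously across the $k$ days. Once the covering partition $D^*$ is secured, the automatic identity $\tau^*=n$ together with Lovász's bound, sharpened to König--Gallai when $k=2$, completes the proof with essentially no further computation.
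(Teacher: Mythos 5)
Your architecture (a topological step on $(\Delta_{n-1})^k$ followed by a Lov\'asz/Gallai covering step) matches the paper's, but there is a genuine gap at the joint between the two steps: the hypergraph argument needs the family of selections at $D^*$ to be \emph{balanced} (to admit a perfect fractional matching), and mere covering of all $kn$ shifts does not force this. Concretely, your claim that ``summing the matching constraints gives $\nu^*\le n$, so by LP duality $\tau^*=\nu^*=n$'' conflates the fractional vertex cover of $H$ (the LP dual of fractional matching) with the fractional \emph{edge} cover $\rho^*(H)=\tau^*(H^D)$, which is the quantity Lov\'asz's theorem must be applied to. Covering alone does not pin $\rho^*$ down to $n$: for $k=2$, $n=3$ the edge set $\{a_1b_1,a_2b_1,a_3b_1,a_1b_2,a_1b_3\}$ consists of $k(n-1)+1=5$ selections covering all six shifts, yet $\rho^*=4>3$ (each of $a_2,a_3,b_2,b_3$ lies in a unique edge, forcing four unit weights) and $\nu^*=2<3$, so both your general bound and your $k=2$ argument (which needs a perfect matching) break down on it. What saves the day in the paper is that the topological step (Theorem \ref{dualPsperner}) produces an elementary simplex whose labels form a \emph{balanced} set; the perfect fractional matching $f$ then has total weight $kn/k=n$ and is simultaneously a fractional edge cover, giving $\tau^*(H^D)=n$ exactly, after which Lov\'asz (and Gallai for $k=2$) apply essentially as you describe.

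The second issue is that your topological step is not a standard Sperner/KKM argument, and the sketch does not show how to make it one. The boundary condition coming from ``prefer empty shifts'' is a \emph{dual} condition: the label $\ell_i(v)$ must avoid $\supp(v_i)$, i.e.\ lie among the empty shifts, which is the opposite of the Sperner constraint, so a fully labeled cell cannot simply be ``extracted.'' The paper handles this by exploiting the employees' indifference between multiple empty shifts to pass to an \emph{equivalent} labeling $\ell'$ (via the cyclic shift $J\mapsto J+1$ on supports) which \emph{is} Sperner, and then applying the polytopal Sperner lemma of DeLoera--Peterson--Su to find a cell whose image contains the barycenter --- this is exactly where balancedness comes from. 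Your proposal would need to supply this relabeling device (or an equivalent one) and to strengthen its topological conclusion from ``the selections cover all shifts'' to ``the selections are balanced''; without that, the bounds $n(1+\ln k)$ and $n$ do not follow.
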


This result is an extension of Theorem 1.5 of \cite{AHZ} which can be interpreted to define the preferences of potential employees more narrowly. There one imagines a scenario in which potential employees submit in advance a personal time schedule, consisting of one time interval per day, during which s/he is willing work. Our Theorem \ref{main3} generalizes this result to every model of employee shift preferences, where  employee daily schedules are not necessarily single intervals, nor predetermined.

For the proof of Theorem \ref{main3} we prove a combinatorial dual extension of the topological KKMS theorem of Shapley \cite{shapley} to products of simplices, where dualization happens in each factor. Let $\Delta_{n-1}$ be the $(n-1)$-dimensional simplex and let $P(n,k)=(\Delta_{n-1})^k$ be the $k(n-1)$-dimensional polytope obtained by taking the Cartesian product of $k$ copies of $\Delta_{n-1}$. 
Let $T$ be a triangulation of $P(n,k)$. We say that a function  
$\ell: V(T) \rightarrow [n]^k$ 
is a \emph{factorwise-dual-Sperner labeling} of $T$ if for every $v=(v_1,\dots,v_k)\in V(T)$ and for every $1\le i \le k$,   
if $[n]\setminus \supp(v_i) \neq \emptyset$, 
we have $\ell(v)_i \in [n]\setminus \supp(v_i)$, where $\supp(v_i)$ is the minimal face of $\Delta_{n-1}$ containing $v_i$. 
We further say that two factorwise-dual-Sperner labelings 
$\ell, \ell': V(T) \rightarrow [n]^k$ 
are \emph{equivalent} if $\ell(v)_i=\ell'(v)_i$
whenever $[n]\setminus \supp(v_i) = \emptyset$. 
We prove: 

\begin{theorem}\label{dualPsperner}
If $T$ is a triangulation of $P(n,k)$ with a factorwise-dual-Sperner labeling $\ell$, then it has an equivalent labeling $\ell'$ with an elementary simplex $Q$ in $T$, such that the set of labels 
$\Lambda(Q)=\{\ell'(v) \mid v\in V(Q)\}$ 
is balanced with respect to $V=V_1\sqcup \dots \sqcup V_k$, where 
$V_i=[n]$ for all $1\le i\le k$. 
\end{theorem}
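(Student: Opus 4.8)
The plan is to reformulate balancedness geometrically and then to run a topological degree argument on the polytope $P(n,k)$. I would identify each label $\alpha \in [n]^k$ with the vertex $(e_{\alpha_1},\dots,e_{\alpha_k})$ of $P(n,k)$, where $e_j$ is the $j$-th vertex of $\Delta_{n-1}$. Under the definition of balancedness with respect to $V=V_1\sqcup\dots\sqcup V_k$, a set $\Lambda(Q)\subseteq[n]^k$ is balanced precisely when the barycenter $x^\ast=(b,\dots,b)$ of $P(n,k)$, with $b=(1/n,\dots,1/n)$, lies in the convex hull $\mathrm{conv}(\Lambda(Q))$: a system of balancing weights $\lambda_\alpha\ge 0$ with $\sum_{\alpha:\alpha_i=j}\lambda_\alpha=1/n$ for every factor $i$ and every $j\in[n]$ is exactly a convex combination of the label-vertices equal to $x^\ast$. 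Thus, letting $f\colon P(n,k)\to P(n,k)$ be the piecewise-linear map sending each $v\in V(T)$ to its label-vertex and affine on every elementary simplex, I observe that $\Lambda(Q)$ is balanced for some elementary simplex $Q$ if and only if $x^\ast\in f(Q)$. Since the elementary simplices cover $P(n,k)$, the theorem is equivalent to the single statement $x^\ast\in\mathrm{image}(f)$, for a suitable labeling equivalent to $\ell$.

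To prove $x^\ast\in\mathrm{image}(f)$ I would compute the Brouwer degree of $f$ over $x^\ast$ and show it is nonzero, exploiting that the factorwise-dual-Sperner condition controls $f$ on the boundary $\partial P(n,k)$. Here the factorwise structure is essential: writing $f=(f_1,\dots,f_k)$ with $f_i\colon P(n,k)\to\Delta_{n-1}$ the $i$-th component, the condition $\ell(v)_i\in[n]\setminus\supp(v_i)$ on each factor with $\supp(v_i)\neq[n]$ is a dual-Sperner (KKM-dual) boundary condition for $f_i$ along the corresponding facets of the $i$-th factor. In the base case $k=1$ this is exactly the hypothesis of the dual Sperner lemma: no boundary simplex of $T$ can carry all $n$ labels, so $x^\ast\notin f(\partial\Delta_{n-1})$ and a short count gives $\deg(f,x^\ast)=\pm 1$, which is the combinatorial form of Shapley's KKMS theorem for one simplex. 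For general $k$ I would assemble the factors over the product boundary $\partial P(n,k)=\bigcup_{i}(\Delta_{n-1})^{i-1}\times\partial\Delta_{n-1}\times(\Delta_{n-1})^{k-i}$ and compute the degree by a product (Künneth-type) argument, or equivalently by induction on $k$ via $P(n,k)=P(n,k-1)\times\Delta_{n-1}$, reducing the contribution of each factor to the base case.

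The main obstacle is precisely this passage from the single simplex to the product, because for $k\ge 2$ the dual-Sperner condition is imposed per vertex-support rather than per face of $P(n,k)$, and it does not by itself prevent the barycenter from being attained on $\partial P(n,k)$ in a degenerate way; consequently the naive degree of $f$ over $x^\ast$ need not even be defined. This is exactly where the freedom to replace $\ell$ by an equivalent labeling $\ell'$ enters: since equivalence fixes the labels only on fully-supported factors and leaves the constrained boundary-factor labels free within $[n]\setminus\supp(v_i)$, I would use this combinatorial freedom, together with a perturbation of $x^\ast$ into general position and a limiting argument, to arrange that $x^\ast\notin f(\partial P(n,k))$ while preserving the factorwise-dual-Sperner property, so that the degree is well defined and the factorwise boundary computation yields $\deg(f,x^\ast)\neq 0$. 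An alternative route I would keep in reserve is to bypass the explicit degree computation by translating the labeling into a closed cover of $P(n,k)$ through the standard labeling-to-cover dictionary and invoking Shapley's topological KKMS theorem in product form; the balanced nonempty intersection it produces can then be pulled back, via a sufficiently fine common refinement of $T$, to a balanced elementary simplex.
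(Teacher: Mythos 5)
Your reduction of balancedness to the statement that the barycenter $x^*$ of $P(n,k)$ lies in the image of the piecewise-linear label map $f$ is correct and is, in effect, also the final step of the paper's proof. The genuine gap is at the step you yourself flag as ``the main obstacle'': for $k\ge 2$ the factorwise-dual-Sperner condition does not control $f$ on $\partial P(n,k)$ well enough for a degree argument, and your proposed fix --- ``use the combinatorial freedom, together with a perturbation of $x^*$ into general position and a limiting argument'' --- is not a mechanism, it is a restatement of the problem. Perturbing the target point does not change $f$ on the boundary, and re-choosing the boundary labels arbitrarily within $[n]\setminus\supp(v_i)$ gives you no handle on whether the resulting degree is defined, let alone nonzero. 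The entire content of the theorem (and the reason the statement allows passing to an equivalent labeling at all) is the specific choice of $\ell'$, which you do not supply. Your reserve plan has the same gap in different clothing: KKMS-type covering theorems on products, including Komiya's theorem used elsewhere in this paper, carry the \emph{primal} boundary condition ($i_j\in J_j$), whereas here the hypothesis is the \emph{dual} one ($\ell_i(v)\notin\supp(v_i)$); invoking ``KKMS in product form'' for the dual condition is essentially assuming the theorem to be proved.

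The missing idea in the paper is an explicit cyclic-shift construction (adapted from Frick--Houston-Edwards--Meunier): for each factor $i$ with $\free(v_i)\neq\emptyset$, set $J_i=\supp(v_i)$ and choose $\ell'_i(v)=j_i+1$ for some $j_i+1\in(J_i+1)\setminus J_i$ (indices mod $n$), keeping $\ell'_i(v)=\ell_i(v)$ on fully supported factors. This single choice does all the work: since $\ell'_i(v)-1\in\supp(v_i)$ for every $i$, the vertex $w$ of $P(n,k)$ with $\supp(w_i)=\{\ell'_i(v)-1\}$ spans a face containing $v$ and carries the same label, so $\ell'$ is a genuine polytopal Sperner labeling in the sense of DeLoera--Peterson--Su. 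Their surjectivity theorem then puts $x^*$ in $f(Q)$ for some elementary simplex $Q$, and balancedness of $\Lambda(Q)$ follows exactly as in your first paragraph (balancedness being invariant under the coordinatewise shift). In short: your frame is right, but the proof lives entirely in the construction of $\ell'$, which your proposal leaves open.
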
 
Here ``balanced" means that one can find non-negative weights on $\Lambda(Q)$ such that the sum of weights in each component is 1. 
Theorems \ref{main3} and \ref{dualPsperner} are proved in Section 6. 

A Corollary of Theorem \ref{main3} guarantees an envy-free way to rent out all of the rooms in two $n$-room buildings, given an adequate pool of potential renters. Consider a set of colleagues, each of whom prefers free 
rooms if available, has closed preference sets, and in any division of the rents finds a collection of two rooms, one in each building, acceptable. 
 \begin{corollary}
If $2n-1$ such players seek to rent two rooms, one in each of two buildings containing $n$ rooms each, then there exists a division 
of rents in which a subset of $n$ players each get their preferred two rooms. 
\end{corollary}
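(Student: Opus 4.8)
The plan is to obtain the Corollary as the rent-division incarnation of Theorem \ref{main3} in the case $k=2$, reading ``rooms'' for ``shifts'' and ``buildings'' for ``days.'' Under this dictionary, a choice of total rent split among the $n$ rooms of one building is recorded by a point of $\Delta_{n-1}$ (the normalized vector of room rents), so a rent division of the two buildings is a point of $P(n,2)=\Delta_{n-1}\times\Delta_{n-1}$, exactly as a partition of $k=2$ days into $n$ shifts each is. A renter's choice of one room in each building is precisely a $k$-piece selection of one shift in each day. The three hypotheses match term by term: every renter finding some acceptable pair of rooms in every rent division is condition (1); the stated preference for free rooms when available is condition (2) (``prefer empty shifts''); and closedness of the renters' preference sets is condition (3). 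So the setup is not a genuine dualization but a transcription of the shift scenario into rental language.

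First I would check the player count. With $k=2$ and $n$ rooms per building we have $k(n-1)+1 = 2n-1 = p$, so the hypothesis $p \ge k(n-1)+1$ of Theorem \ref{main3} is satisfied with equality. Applying the theorem in its $k=2$ case therefore produces a rent division (a partition of the two buildings into $n$ priced rooms each) together with a subset $S$ of $n$ renters whose preferred pairs together cover all $2n$ rooms.

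The final step is a counting argument that upgrades this cover to a disjoint, envy-free allocation. Each renter in $S$ selects exactly one room in each building (one shift per day by condition (1)), so the $n$ renters make $n$ first-building choices and $n$ second-building choices, counted with multiplicity. Since these must cover all $n$ rooms of building $1$, and there are only $n$ such rooms, the $n$ first-building choices are forced to be pairwise distinct and to exhaust building $1$; the identical reasoning applies to building $2$. Hence the $n$ renters in $S$ have pairwise disjoint preferred pairs that together tile both buildings. Assigning each renter their preferred pair then rents out every room and gives each renter their most preferred two rooms, which is exactly the desired envy-free division.

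The only point requiring care is this equality-case counting. One must use that a selection takes \emph{exactly} one room per building, together with the exact match $|S|\cdot k = n\cdot 2 = 2n$ between chosen room-slots and available rooms, so that ``covering'' is forced to be an exact partition rather than merely a surjection with overlaps. Everything else is a direct transcription of Theorem \ref{main3} through the goods/bads correspondence used throughout the paper, so I expect no further obstacle.
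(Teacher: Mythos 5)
Your proposal is correct and follows exactly the route the paper intends: the corollary is stated as an immediate consequence of Theorem \ref{main3} with $k=2$ and $p=2(n-1)+1=2n-1$, under the dictionary rooms/shifts and buildings/days. Your explicit pigeonhole step---that $n$ selections of one room per building covering all $n$ rooms of each building must be bijective onto each building, so the $n$ preferred pairs are pairwise disjoint and the cover is in fact an exact allocation---is the (correct) observation the paper leaves implicit in passing from ``cover all the $2n$ shifts'' to ``each get their preferred two rooms.''
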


\section{Preliminaries in hypergraph theory}
A {\em hypergraph} is a pair $H=(V,E)$ where $V=V(H)$ is a {\em vertex set}, and $E=E(H)$ is a finite collection of subsets of $V$ called {\em edges}.  
A hypergraph $H=(V,E)$ is {\em $k$-partite} if there exists a partition $V=V_1\cup\dots\cup V_k$ where every edge $e\in E$ has $|e\cap V_i|=1$ for all $1\le i\le k$. 
A $2$-partite hypergraph is a {\em bipartite graph}. 
The {\em rank} of a hypergraph $H=(V,E)$, denoted $\rank(H)$, is the maximum size of an edge in $E$, and the {\em degree} of a vertex $v\in V$, denoted $\deg(v)$, is the number of edges in $E$ containing $v$. 
A hypergraph $H=(V,E)$ is $k$-uniform if $|e|=k$ for all $e\in E$.    
 
A {\em matching} in a hypergraph $H=(V,E)$ is a set of disjoint edges. The \emph{matching number} $\nu(H)$ is the maximal size of a matching in $H$. A {\em fractional matching} of $H$ is a function $f:E\to [0,1] $, 
such that for every $v\in V$ we have $\sum_{e: v\in e} f(e) \le 1$. 
We can think of $f(e)$ as the \emph{weight} of edge $e$ in the formal sum 
$\sum_{e \in E} f(e) \cdot e$, where at each vertex $v$, the sum of coefficient of edges containing $v$ is at most $1$. 
The {\em fractional matching number} of $H$  is 
denoted by $\nu^*(H)$ and is defined as 
$$\nu^*(H)= \max\big\{\sum_{e\in E}f(e) \mid 
f\text{ is a fractional matching of } H\big\}.$$ 

A fractional matching $f:E(H)\to [0,1]$ is called {\em perfect} if for every $v\in V$ we have  
$\sum_{e: v\in e} f(e) = 1$. 
A collection of sets $E\subset 2^V$ is {\em balanced with respect to a set $V$} if the hypergraph $H=(V,E)$ has a perfect fractional matching. 

For example, consider the vertex set $V=\{1,2,3,\bar{1},\bar{2},\bar{3}\}$, with edge set comprised of all pairs of vertices with one barred and one un-barred vertex. Then the edge set $\{e_1=(1 \bar{1}),e_2=(2 \bar{1}),e_3=(1 \bar{2}),e_4=(2 \bar{2}),e_5=(3\bar{3})\}$ is balanced with respect to $V$ since there are edge weights that sum to $1$ at each vertex. In particular, 
 $\frac{1}{2}e_1 + \frac{1}{2}e_2+ \frac{1}{2}e_3 + \frac{1}{2}e_4+1e_5$ is a perfect fractional matching.

A {\em cover} of a hypergraph $H=(V,E)$ is a subset of vertices that intersect every edge. The \emph{covering number} $\tau(H)$ is the minimal size of a cover in $H$. A {\em fractional cover} of $H$ is a function $g:V\to [0,1] $, such that for every $e\in E$ we have $\sum_{v: v\in e} g(v) \ge 1$. The {\em fractional covering number} of $H$ is 
$$\tau^*(H)= \min\big\{\sum_{v\in V}g(v) \mid g  \text{ is a fractional covering of } H\big\}.$$ A {\em perfect fractional cover} of $H$ is a fractional cover with $\sum_{v: v\in e} g(v) = 1$ for every $e\in E$.
By linear programming duality, we have $\nu(H)\le \nu^*(H)=\tau^*(H) \le \tau(H)$ for every hypergraph $H$.

\begin{lemma}\label{rank}
If a hypergraph $H=(V,E)$ 
of rank $n$  has a perfect fractional matching, then $\nu^*(H)\ge \frac{|V|}{n}$. If, in addition, $H$ is $n$-uniform, then $\nu^*(H)= \frac{|V|}{n}$.  
\end{lemma}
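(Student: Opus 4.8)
The plan is to prove both claims by a single double-counting identity that relates the total weight of a fractional matching computed over edges to the same quantity computed over vertices. Let $f$ be a perfect fractional matching of $H$, so that $\sum_{e \ni v} f(e) = 1$ for every $v \in V$. The first step I would carry out is to establish the reordering identity
$$\sum_{v \in V} \sum_{e \ni v} f(e) = \sum_{e \in E} |e|\, f(e),$$
which holds because each edge $e$ contributes its weight $f(e)$ exactly once for each of its $|e|$ incident vertices. By the perfect-matching condition the left-hand side equals $\sum_{v \in V} 1 = |V|$.

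For the first claim, I would next invoke the hypothesis $\rank(H) = n$, i.e.\ $|e| \le n$ for every edge, to bound the right-hand side above by $n \sum_{e \in E} f(e)$. Combining with the identity gives $|V| \le n \sum_{e \in E} f(e)$. Since $f$ is in particular a fractional matching, its total weight $\sum_{e \in E} f(e)$ is at most the fractional matching number $\nu^*(H)$, and therefore $\nu^*(H) \ge |V|/n$, as desired.

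For the second claim, suppose in addition that $H$ is $n$-uniform, so $|e| = n$ for every edge. Then the identity becomes an equality $n \sum_{e \in E} f(e) = |V|$, which already yields the lower bound $\nu^*(H) \ge \sum_{e \in E} f(e) = |V|/n$. To obtain the matching upper bound I would run the identity in reverse for an \emph{arbitrary} fractional matching $g$: the defining constraint $\sum_{e \ni v} g(e) \le 1$ gives
$$n \sum_{e \in E} g(e) = \sum_{v \in V} \sum_{e \ni v} g(e) \le \sum_{v \in V} 1 = |V|,$$
so every fractional matching has total weight at most $|V|/n$, whence $\nu^*(H) \le |V|/n$. Together the two inequalities force equality. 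There is no substantive obstacle in this argument; the only point deserving care is keeping track of which direction each inequality runs (rank bounds $|e|$ from above, while uniformity pins it exactly), and noting that the uniform upper bound $\nu^*(H) \le |V|/n$ in fact needs no assumption beyond $n$-uniformity, with the perfect fractional matching used solely to attain equality.
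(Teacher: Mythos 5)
Your proof is correct, and the double-counting identity $\sum_{v\in V}\sum_{e\ni v}f(e)=\sum_{e\in E}|e|f(e)$ together with the rank bound is exactly the paper's argument for the first claim. The only place you diverge is the upper bound $\nu^*(H)\le |V|/n$ in the $n$-uniform case: the paper exhibits the constant function $g\equiv \frac{1}{n}$ as a fractional cover and invokes the LP duality $\nu^*(H)=\tau^*(H)\le\sum_{v}g(v)=\frac{|V|}{n}$, whereas you run the same double count on an arbitrary fractional matching $g$ and conclude $n\sum_{e}g(e)\le |V|$ directly. These are dual phrasings of one computation; yours is marginally more self-contained since it does not appeal to $\nu^*=\tau^*$, while the paper's fits the surrounding framework, which already sets up fractional covers and duality for later use (Theorems \ref{lovasz} and \ref{bipgallai}). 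Your closing observation that $n$-uniformity alone gives the upper bound, with the perfect fractional matching needed only to attain it, is accurate and matches the logic of the paper's proof.
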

\begin{proof}
Let $f:E\to [0,1]$ be a perfect fractional matching of $H$. Then 
\begin{equation}\label{eq1}
\sum_{v\in V}\sum_{e: v\in e} f(e) = \sum_{v\in V} 1 = |V|.
\end{equation} 
Since $f(e)$ was counted $|e|\le n$ times in (\ref{eq1}) for every edge $e\in E(H)$, we have that 
$$\nu^*(H)\ge \sum_{e\in E} f(e) \ge \frac{|V|}{n}.$$ 
If $H$ is $n$-uniform, then the constant function $g:V\to \{\frac{1}{n}\}$ is a fractional cover and therefore $\nu^*(H)=\tau^*(H)\le \frac{|V|}{n}$.  Combining with the inequality above, we have $\nu^*(H)=\frac{|V|}{n}$.
\end{proof}

We will use the following bounds on the ratio $\nu^*/\nu$ and $\tau/\tau^*$:
\begin{theorem}[F\"uredi \cite{furedi}]\label{furedi}
If $H$ is a hypergraph of rank $n\ge 2$, then
$\nu(H) \ge \frac{\nu^*(H)}{n-1+\frac{1}{n}}$. If $H$
is $n$-partite, then
$\nu(H) \ge \frac{\nu^*(H)}{n-1}$.
\end{theorem}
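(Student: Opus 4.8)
The plan is to prove the equivalent dual form $\nu^*(H)\le (n-1+\tfrac1n)\,\nu(H)$, and $\nu^*(H)\le (n-1)\,\nu(H)$ in the $n$-partite case, by combining linear-programming duality with a structural analysis of an optimal fractional matching. First I would invoke the standard fact that the fractional matching polytope $\{f\ge 0 : \sum_{e\ni v} f(e)\le 1 \text{ for all } v\}$ is rational, so $\nu^*(H)$ is attained at a vertex (extreme point) $f$; I will work with such an $f$ and write $E_0=\{e: f(e)>0\}$ for its support. The point of passing to an extreme point is rigidity: at a vertex of $\mathbb{R}^{E}$ one needs $|E|$ linearly independent active constraints, and once the coordinate constraints $f(e)=0$ for $e\notin E_0$ are removed, the \emph{tight} vertex constraints $\sum_{e\ni v}f(e)=1$ must contribute rank $|E_0|$ on the $E_0$-coordinates. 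Hence $|E_0|\le|T|$, where $T$ is the set of tight vertices, and there is a subset $T_0\subseteq T$ with $|T_0|=|E_0|$ whose incidence with $E_0$ is a nonsingular $0$--$1$ matrix. This is the hypergraph analogue of the fact that the extreme fractional matchings of a graph are supported on disjoint edges and odd cycles.

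Next I would set up the connection to covers. By the duality recorded in the excerpt, $\nu^*(H)=\tau^*(H)$, so it suffices to build a fractional cover of small total weight from a maximum integral matching $M=\{e_1,\dots,e_\nu\}$. Writing $V(M)=\bigcup_i e_i$, a set of at most $n\nu$ vertices, maximality of $M$ forces every edge of $H$ to meet $V(M)$; thus the indicator of $V(M)$ is already a cover, giving the trivial estimate $\nu^*=\tau^*\le|V(M)|\le n\nu$. The entire content of the theorem is the improvement of this factor $n$ to $n-1$ (respectively $n-1+\tfrac1n$), i.e.\ saving on average one unit of cover-weight per matching edge.

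In the $n$-partite case I would attempt the averaging construction: since each edge meets each color class exactly once, for each color $c$ let $g_c$ be the indicator of $V(M)$ with its $c$-th color class deleted, and set $g=\frac1n\sum_c g_c$, which puts weight $\tfrac{n-1}{n}$ on every matched vertex for a total of $(n-1)\nu$. This $g$ fractionally covers every edge meeting $V(M)$ in at least two vertices but fails on edges meeting the matching in a single vertex. Handling these ``barely covered'' edges is where the extremal structure must enter: using the linear independence of the tight incidences one shows $E_0$ splits into connected ``critical'' components, and one bounds the local ratio $\nu^*_C/\nu_C$ componentwise. In the $n$-partite case the partite constraint excludes the self-reinforcing components and yields the clean factor $n-1$; in the general rank-$n$ case the worst component behaves like the projective plane of order $n-1$ (the triangle when $n=2$), where all lines pairwise intersect so $\nu_C=1$ while the uniform weight $\tfrac1n$ gives $\nu^*_C=\tfrac{n^2-n+1}{n}=n-1+\tfrac1n$, accounting exactly for the extra $\tfrac1n$.

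I expect the main obstacle to be precisely this per-component analysis: extracting an integral matching of the guaranteed size from an extreme fractional matching on a critical component and verifying that no component beats the extremal examples. The natural mechanism is induction on $|E_0|$ or on $|V|$ — peel off an edge with $f(e)=1$ or a non-tight vertex, check that $\nu$ and $\nu^*$ decrease compatibly, and otherwise reduce to the rigid tight case, where the count $|E_0|\le|T|$ together with a system-of-distinct-representatives argument (the nonsingular incidence matrix of $T_0$ has a nonzero determinant, hence a permutation assigning each edge a distinct tight representative) pins down the component and its ratio. Making this bookkeeping yield exactly the sharp constants $n-1$ and $n-1+\tfrac1n$, rather than a weaker bound, is the delicate part.
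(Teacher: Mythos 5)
This is a quoted result: the paper does not prove Theorem~\ref{furedi} at all, it imports it from F\"uredi's 1981 paper, so there is no internal proof to compare your argument against. Judged on its own terms, your proposal is a plan rather than a proof, and the step you defer is exactly where the entire difficulty of the theorem lives. Everything you actually establish is correct but elementary: that $\nu^*$ is attained at an extreme point whose support $E_0$ satisfies $|E_0|\le |T|$ with a nonsingular tight-incidence submatrix; that $\nu^*=\tau^*$; that $V(M)$ is a cover of total weight at most $n\nu$; that the color-averaged weighting $g=\frac1n\sum_c g_c$ has total weight $(n-1)\nu$ but fails on edges meeting $V(M)$ in a single vertex; and that the projective plane of order $n-1$ realizes the ratio $n-1+\frac1n$. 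None of this yields the bound. The claim that ``$E_0$ splits into connected critical components and one bounds the local ratio $\nu^*_C/\nu_C$ componentwise'' is not something that follows from the rank count $|E_0|\le|T|$: a connected component of the support of an extreme optimal fractional matching need not be intersecting and need not have $\nu_C=1$ (already for $C_5$ the unique optimal fractional matching is the all-$\frac12$ weighting, whose support is connected with $\nu=2$), so you cannot simply compare each component to a projective plane. You would need F\"uredi's actual structural lemma about how an optimal fractional matching decomposes, plus the theorem that an intersecting (or critical) rank-$n$ family has $\nu^*\le n-1+\frac1n$, and a separate argument excluding the projective-plane-type components in the $n$-partite case. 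You acknowledge this (``the delicate part''), but acknowledging a gap does not close it.

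Concretely: the sentence ``in the $n$-partite case the partite constraint excludes the self-reinforcing components and yields the clean factor $n-1$'' is the whole theorem in that case, and your proposal gives no mechanism for it; likewise ``one bounds the local ratio componentwise'' presupposes the decomposition and the per-component bound that are the substance of F\"uredi's argument. Since the paper's intent is simply to cite \cite{furedi}, the appropriate resolution here is either to do the same, or to actually carry out the critical-component analysis (an induction on the support, a classification of the tight components, and the extremal characterization via projective planes), which is a genuinely nontrivial piece of work that your outline does not contain.
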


\begin{theorem}[Lov\'asz \cite{lovasz}]\label{lovasz}
If $H$ is a hypergraph with maximal degree $d$, then $\tau(H) \le (1+\ln
d)\tau^*(H)$.
\end{theorem}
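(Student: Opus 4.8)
The plan is to exhibit an explicit cover produced by a greedy procedure and to charge its cost against a fractional matching, using the duality $\tau^*(H)=\nu^*(H)$ recorded above. First I would recast the problem as covering the edge set: for each vertex $v\in V$ let $S_v=\{e\in E:v\in e\}$, so that $|S_v|=\deg(v)\le d$, and a cover of $H$ is precisely a subfamily of the sets $S_v$ whose union is all of $E$. Since $\tau^*(H)=\nu^*(H)$, it suffices to construct an integral cover of size at most $(1+\ln d)\,\nu^*(H)$.

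Next I would run the greedy algorithm: repeatedly select a vertex covering the largest number of not-yet-covered edges, until all of $E$ is covered, and let $G$ be the number of vertices selected, so that $\tau(H)\le G$. The key is a dual-fitting charge. Each time the greedy step selects a vertex covering exactly $k$ new edges, assign to each of those edges a price $1/k$; writing $y_e$ for the price eventually received by edge $e$, we have $G=\sum_{e\in E}y_e$. I claim that $y/H_d$ is a feasible fractional matching of $H$, where $H_d=\sum_{i=1}^{d}\frac1i$ denotes the $d$-th harmonic number. To see this, fix a vertex $v$ with $\deg(v)=s\le d$ and order the edges of $S_v$ by the greedy step at which each is first covered. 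At the moment the edge that leaves exactly $j$ uncovered edges in $S_v$ gets covered, the vertex chosen by greedy covers at least $j$ new edges---selecting $v$ itself would already cover these $j$---so that edge's price is at most $1/j$. Summing over $j=s,s-1,\dots,1$ gives $\sum_{e\in S_v}y_e\le\sum_{j=1}^{s}\frac1j=H_s\le H_d$, which is exactly the matching constraint $\sum_{e\ni v}\bigl(y_e/H_d\bigr)\le 1$. Hence $\sum_{e\in E}y_e/H_d\le\nu^*(H)$, so $G\le H_d\,\nu^*(H)$, and combining $\tau(H)\le G$, $\nu^*(H)=\tau^*(H)$, and the elementary estimate $H_d\le 1+\ln d$ yields the claim.

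The delicate point is the per-vertex bound $\sum_{e\in S_v}y_e\le H_d$: this is precisely what sharpens the result from the $\ln|E|$ one obtains by a crude geometric-decay analysis---at each step an averaging argument against an optimal fractional cover $g$ guarantees a vertex covering at least a $1/\tau^*$ fraction of the remaining edges, forcing the uncovered count down by a factor $1-1/\tau^*$---to the sharp $\ln d$. The argument hinges on correctly pairing the number of newly covered edges at each greedy step with the shrinking tail of $S_v$; the only care required is to fix a definite tie-breaking order so the indexing $j=s,\dots,1$ is well-defined, but no idea beyond the charging scheme is needed.
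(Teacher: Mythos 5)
Your proof is correct: the paper does not prove this theorem but merely cites it from Lov\'asz's 1975 paper, and your greedy argument with the harmonic-number dual-fitting charge (bounding $\sum_{e\ni v} y_e \le H_{\deg(v)} \le H_d \le 1+\ln d$ and invoking $\nu^*=\tau^*$) is essentially the classical proof given there. The one point you rightly flag --- fixing a tie-breaking order so that the tail count $j$ is well-defined when several edges of $S_v$ are covered in the same greedy step --- is handled correctly, since just before that step all of them are still uncovered and selecting $v$ would cover at least that many new edges.
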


The dual hypergraph $H^{D}=(U,F)$ of a hypergraph $H=(V,E)$ is obtained by reversing the roles of vertices and edges, namely, $U=E$, $F=V$, and an edge $v\in F$ consists of all the vertices $e \in U$ for which $v\in e$ in $H$. If $H$ is $d$-uniform then $\deg(e) = d$ for all $e \in U$ and we have the following corollary of Theorem \ref{lovasz}:  
\begin{corollary}\label{lovaszcor}
Let $H$ be a $d$-uniform hypergraph then $\tau(H^D) \le (1+\ln d)\tau^*(H^D)$.
\end{corollary}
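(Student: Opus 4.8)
The plan is to apply Lov\'asz's theorem (Theorem \ref{lovasz}) directly to the dual hypergraph $H^D$, since the desired inequality $\tau(H^D) \le (1+\ln d)\tau^*(H^D)$ is precisely the conclusion of Theorem \ref{lovasz} applied to the hypergraph $H^D$, \emph{provided} the maximal degree of $H^D$ equals $d$. Thus the entire content of the corollary reduces to a single bookkeeping step: identifying the maximal degree of $H^D$.

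First I would unwind the definition of the dual. Recall that $H^D=(U,F)$ has $U=E(H)$ and $F=V(H)$, where the edge of $H^D$ indexed by a vertex $v\in V(H)$ is the set $\{e\in E(H) : v\in e\}$. Consequently a vertex of $H^D$ is an edge $e\in E(H)$, and its degree $\deg(e)$ in $H^D$ is the number of edges of $H^D$ that contain $e$, that is, the number of $v\in V(H)$ with $v\in e$. This count is exactly $|e|$.

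Next I would invoke $d$-uniformity of $H$: since $|e|=d$ for every $e\in E(H)$, every vertex of $H^D$ has degree exactly $d$, and in particular the maximal degree of $H^D$ equals $d$. Applying Theorem \ref{lovasz} to $H^D$ with this value of $d$ then yields the claim at once.

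I do not anticipate a genuine obstacle here, as the statement is a purely formal consequence of Lov\'asz's theorem together with the degree computation above. The only point demanding care is correctly tracking the dualization, namely ensuring that ``the degree of a vertex in $H^D$'' is identified with ``the size of the corresponding edge in $H$'' rather than with the degree of a vertex in $H$. Once this identification is made, no estimation or further argument is required.
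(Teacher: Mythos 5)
Your proposal is correct and follows exactly the paper's reasoning: the paper's justification is the remark preceding the corollary that in $H^D$ every vertex $e\in U=E(H)$ has degree $|e|=d$ by $d$-uniformity, so Theorem \ref{lovasz} applies to $H^D$ with maximal degree $d$. Your careful identification of ``degree of a vertex of $H^D$'' with ``size of the corresponding edge of $H$'' is precisely the one bookkeeping step the paper relies on.
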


If $H$ is a bipartite graph then the situation is even better:
\begin{theorem}[Gallai \cite{gal59}]\label{bipgallai}
If $H$ is a bipartite graph then $\tau(H^D) =\tau^*(H^D)$.
\end{theorem}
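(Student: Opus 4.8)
The plan is to translate both invariants $\tau(H^D)$ and $\tau^*(H^D)$ back into classical parameters of the bipartite graph $H=(V,E)$ and then appeal to bipartite min--max theory. First I would unwind the dual construction: the vertices of $H^D$ are the edges of $H$, and the edge of $H^D$ indexed by a vertex $v\in V$ is the star $\{e\in E: v\in e\}$. Hence a set $S\subseteq E$ covers $H^D$ exactly when every $v\in V$ is an endpoint of some edge in $S$, i.e. when $S$ is an edge cover of $H$; so $\tau(H^D)=\rho(H)$, the minimum edge-cover number (here $H$ is assumed to have no isolated vertices, so that edge covers exist; otherwise both sides are infinite and there is nothing to prove). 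Identically, $g:E\to[0,1]$ is a fractional cover of $H^D$ precisely when $\sum_{e\ni v}g(e)\ge 1$ for all $v$, so $\tau^*(H^D)=\rho^*(H)$ is the fractional edge-cover number. The theorem thus asserts $\rho(H)=\rho^*(H)$ for bipartite $H$.

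Next I would express these quantities through matchings. The LP-duality recorded in the excerpt gives $\tau^*(H^D)=\nu^*(H^D)$. Reading off the dual once more, a fractional matching of $H^D$ assigns weights $f(v)$ to the vertices $v\in V$ subject to $\sum_{v\in e}f(v)\le 1$ for every edge $e$ of $H$; since each edge of a graph has two endpoints, this is exactly a fractional independent set of $H$, so $\nu^*(H^D)=\alpha^*(H)$, the fractional independence number. By fractional complementarity (a fractional vertex cover is $1$ minus a fractional independent set, and fractional vertex cover equals fractional matching by LP-duality) one gets $\alpha^*(H)=|V|-\nu^*(H)$. On the integral side, Gallai's identity yields $\rho(H)=|V|-\nu(H)$. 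The only point where bipartiteness enters is König's theorem, which closes the gap between the fractional and integral matching numbers, $\nu^*(H)=\nu(H)$. Chaining these equalities gives $\tau^*(H^D)=\alpha^*(H)=|V|-\nu^*(H)=|V|-\nu(H)=\rho(H)=\tau(H^D)$, as required.

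The genuine content lives entirely in this last chain; the main obstacle is simply getting the dual dictionary exactly right --- recognizing that covering $H^D$ is edge-covering $H$, that fractional matchings of $H^D$ are fractional independent sets of $H$, and carrying the harmless no-isolated-vertex hypothesis --- after which the equality reduces to König's theorem together with Gallai's complementary relations. If one prefers to bypass the Gallai identities, there is a one-line alternative: the vertex--edge incidence matrix $M$ of a bipartite graph is totally unimodular, so the edge-cover polyhedron $\{g\ge 0: Mg\ge \mathbf 1\}$ has only integral vertices and the edge-cover linear program attains an integral optimum, giving $\rho^*(H)=\rho(H)$ directly.
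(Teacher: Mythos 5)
Your proof is correct. Note first that the paper offers no proof of this statement at all: it is quoted as a classical result of Gallai \cite{gal59}, so there is nothing internal to compare against. Your derivation is the standard one and, as it happens, is essentially Gallai's own route: you correctly decode the dual hypergraph so that $\tau(H^D)$ is the minimum edge-cover number $\rho(H)$ and $\tau^*(H^D)$ is its fractional relaxation $\rho^*(H)$, then chain $\rho^* = \nu^*(H^D)=\alpha^*(H) = |V|-\nu^*(H)$ (LP duality plus the complementation $g \mapsto \mathbf 1 - g$, which uses that every edge of a \emph{graph} has exactly two endpoints) against the integral Gallai identity $\rho(H)=|V|-\nu(H)$, with K\"onig's theorem $\nu(H)=\nu^*(H)$ supplying the only place bipartiteness is used. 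The isolated-vertex caveat you carry is genuinely needed for the statement to make sense, and it is harmless in the paper's application (Proposition~\ref{dualkomiyacover}), where $H=(V,\Lambda(Q))$ has a perfect fractional matching and hence no isolated vertices. Your closing remark is also right and is arguably the cleanest proof: the incidence matrix of a bipartite graph is totally unimodular, so the edge-cover LP has an integral optimal vertex and $\rho^*=\rho$ immediately, bypassing K\"onig and the Gallai identities altogether.
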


\section{Complete triangulations of polytopes}

Let $T$ be a  triangulation of a $(d-1)$-dimensional polytope $P$ and let $V(T)$ be the vertices ($0$-dimensional faces) of $T$. The $(d-1)$-dimensional faces of $T$ are called the {\em elementary simplices} of $T$. 
For each vertex $v\in V(T)$ we assign an owner $o_v\in [d]$ (where $[d]$ is thought of as a set of players). Such an assignment will be said to be {\em complete} if every elementary simplex $Q$ in $T$ has $\{o_v \mid v\in V(Q)\} = [d]$ and a triangulation will be called {\em complete} if it has a complete ownership assignment. 

\begin{lemma}\label{complete}
Every $d$-dimensional polytope $P$ has a complete triangulation. 
\end{lemma}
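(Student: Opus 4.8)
The plan is to exhibit a complete triangulation as the \emph{barycentric subdivision} of an arbitrary triangulation of $P$, exploiting the fact that the vertices of a barycentric subdivision are naturally graded by dimension and that every top-dimensional simplex of the subdivision realizes each grade exactly once. Observe first that a complete ownership assignment on a triangulation of the $d$-polytope $P$ is precisely a coloring $o\colon V(T)\to [d+1]$ in which each elementary ($d$-dimensional) simplex, having $d+1$ vertices, receives all $d+1$ colors; such a triangulation is what is usually called \emph{balanced}. So the task reduces to producing a balanced triangulation together with its rainbow coloring.

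First I would fix any triangulation $T_0$ of $P$ (every polytope admits one, for instance a pulling triangulation using only its own vertices). Let $T$ be the barycentric subdivision of $T_0$: the vertices $V(T)$ are the barycenters $b_F$ of the faces $F$ of $T_0$ (over all dimensions $0\le \dim F\le d$), and the simplices of $T$ are the sets $\{b_{F_0},\dots,b_{F_j}\}$ arising from chains $F_0\subsetneq F_1\subsetneq\dots\subsetneq F_j$ in the face poset of $T_0$. Geometrically $|T|=|T_0|=P$, so $T$ is again a triangulation of $P$.

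Next I would define the owner of each vertex by its grade, $o_{b_F}=\dim F + 1\in\{1,\dots,d+1\}=[d+1]$, and verify completeness. Since $P$ is a $d$-polytope, $T_0$ is pure of dimension $d$, so every elementary simplex $Q$ of $T$ corresponds to a \emph{maximal} chain $F_0\subsetneq F_1\subsetneq\dots\subsetneq F_d$ with $\dim F_i=i$ for each $i$: a maximal chain must start at a vertex, end at a $d$-face, and increase dimension by exactly one at each step, since it has $d+1$ entries spanning the dimensions $0$ through $d$. Hence the owners of the $d+1$ vertices of $Q$ are $1,2,\dots,d+1$, all distinct, so $\{o_v\mid v\in V(Q)\}=[d+1]$, and $T$ is complete.

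The only points needing care — and the main, though routine, obstacle — are the standard facts that the barycentric subdivision of a triangulation of $P$ is itself a genuine geometric triangulation of $P$, and that purity of $T_0$ forces each maximal flag to contain exactly one face of each dimension $0,\dots,d$. Both are elementary in simplicial topology, but I would state them explicitly, since the grading argument is exactly what produces the owner function, after which completeness reduces entirely to the combinatorics of maximal flags.
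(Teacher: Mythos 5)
Your proof is correct and is essentially identical to the paper's: both take the barycentric subdivision of an arbitrary triangulation and assign each barycenter the owner determined by the dimension of the face it subtends, with completeness following because every maximal flag contains exactly one face of each dimension. You simply spell out the details (purity, the flag structure of the subdivision) that the paper leaves implicit.
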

\begin{proof}
Let $T$ be any triangulation of $P$. Then the first subdivision $T'$ of $T$ is a complete triangulation of $P$, with the assignment $o_v = i$ if $v\in V(T')$ is the barycenter of an $(i-1)$-dimensional face of $T$.  \end{proof}

\section{Allocating multiple pieces of one cake to players}\label{sec:1cake}
In this section, we consider dividing one cake into $n$ pieces and assigning $k$ pieces to each player. We imagine that our cake is rectangular, and is divided by parallel vertical cuts. We represent the division of the cake into $n$ pieces by the $n$-tuple $\vec{x}=(x_1,x_2,
\ldots , x_{n})\in \mathbb{R}^{n}_{\geq 0}$ satisfying $\sum_{i=1}^{n}x_i=1$, where $x_i$ denotes the width of piece $i$. The space of all possible divisions of one cake into $n$ pieces can therefore be realized by the $(n-1)$-dimensional simplex $\Delta_{n-1}$.

The preferences of an individual player can be visualized as a covering of the space of divisions of the cake by sets corresponding to all possible collections of $k$ pieces. Each point in the space is assigned to the set corresponding to the $k$ pieces that the player would prefer if the cake were cut according to that division. We note that it is possible for a point to belong to multiple sets if the player is indifferent to more than one collection of $k$ pieces in the given division. 

We make the following natural assumptions on player preferences.
\begin{enumerate}
    \item The players are hungry: no player prefers an empty piece of cake to a non-empty piece.
    \item The players have closed preference sets: if a player prefers a given set of $k$ pieces in a sequence of divisions that approach a limit, then the player prefers that same set of $k$ pieces in the cut corresponding to the limit of the sequence of divisions.
    \end{enumerate}

For $t=(t_1,\dots,t_n)\in \R^n$ define the support of $t$ to be the set $\text{supp}(t)=\{i \mid t_i\neq 0\}.$
Our main tool in the proof of Theorem \ref{main2} is a generalization of Sperner's lemma, due to Shapley \cite{shapley}, in which vertices have subsets of labels (rather than single labels):

\begin{theorem}[Shapley \cite{shapley}] \label{discretekkms}
If $T$ is a triangulation of $\Delta_{n-1}$ with a labeling $L:V(T) \rightarrow 2^{[n]}$ such that $L(v) \subseteq \text{supp}(v)$,
then there exists an elementary simplex $Q$ in $T$ where the set 
$\{L(v) \mid v\in V(Q)\}$ is a balanced set with respect to $[n]$. 
\end{theorem}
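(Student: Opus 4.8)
The plan is to translate the balancedness conclusion into the statement that a single interior point lies in the image of a piecewise-linear self-map of the simplex, and then to force that point into the image by a degree argument. First I would record the dictionary between balanced families and barycenters. For a nonempty $S\subseteq[n]$ let $m_S=\frac{1}{|S|}\sum_{i\in S}e_i$ be the barycenter of the face $\sigma_S=\mathrm{conv}\{e_i:i\in S\}$, and let $b=(\tfrac1n,\dots,\tfrac1n)$ be the barycenter of $\Delta_{n-1}$. A family $\{S_1,\dots,S_r\}$ is balanced with respect to $[n]$ precisely when $b\in\mathrm{conv}\{m_{S_1},\dots,m_{S_r}\}$: given $b=\sum_j\mu_j m_{S_j}$ with $\mu_j\ge 0$ and $\sum_j\mu_j=1$, the weights $\lambda_j=\frac{n}{|S_j|}\mu_j$ satisfy $\sum_{j:\,i\in S_j}\lambda_j=1$ for every $i$, and conversely. (Here I assume each $L(v)$ is nonempty, as in the usual KKMS hypothesis, so that $m_{L(v)}$ is defined.)

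Next I would carry the combinatorics into topology. Let $f:\Delta_{n-1}\to\Delta_{n-1}$ be the unique map sending each vertex $v\in V(T)$ to $m_{L(v)}$ and affine on every elementary simplex of $T$; this is well defined and continuous because $T$ is a triangulation. The crucial consequence of the hypothesis $L(v)\subseteq\supp(v)$ is that $f$ preserves faces: since a triangulation of $\Delta_{n-1}$ restricts to a triangulation of each face $\sigma_S$, every vertex $v$ lying in $\sigma_S$ has $\supp(v)\subseteq S$, hence $L(v)\subseteq S$ and $m_{L(v)}\in\sigma_S$; therefore $f(\sigma_S)\subseteq\sigma_S$, and in particular $f(\partial\Delta_{n-1})\subseteq\partial\Delta_{n-1}$.

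I would then show that $b$ lies in the image of $f$. The straight-line homotopy $H(x,t)=(1-t)x+tf(x)$ keeps each boundary point inside the minimal face containing it (that face is convex and is mapped into itself), so $f|_{\partial\Delta_{n-1}}$ is homotopic, within $\partial\Delta_{n-1}\cong S^{n-2}$, to the identity and hence has degree $1$. If $b$ were not in the image of $f$, radial projection from $b$ would give a continuous map $r:\Delta_{n-1}\to\partial\Delta_{n-1}$, where $r(x)$ is the point in which the ray from $b$ through $f(x)$ meets $\partial\Delta_{n-1}$; because $f$ already sends $\partial\Delta_{n-1}$ into itself, $r$ restricts on the boundary to $f|_{\partial\Delta_{n-1}}$, a degree-$1$ self-map of $S^{n-2}$. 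But a map $S^{n-2}\to S^{n-2}$ that extends continuously over the disk $\Delta_{n-1}$ must be null-homotopic, a contradiction. (For $n=1$ the statement is trivial.) Hence $b=f(p)$ for some $p$, and taking any elementary simplex $Q$ containing $p$ gives $b=f(p)\in f(Q)=\mathrm{conv}\{m_{L(v)}:v\in V(Q)\}$. By the dictionary, $\{L(v):v\in V(Q)\}$ is balanced with respect to $[n]$, which is the desired conclusion.

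I expect the main obstacle to be the degree/surjectivity step: one must verify carefully that face-preservation makes $f|_{\partial\Delta_{n-1}}$ a degree-$1$ map and thereby rule out the possibility that $f$ misses the interior point $b$. By comparison, the bookkeeping in the balancedness dictionary and the (harmless) convention that labels are nonempty are routine.
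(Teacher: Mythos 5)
Your argument is correct. Note, however, that the paper does not prove this statement at all: it is quoted as Shapley's theorem and used as a black box, so there is no in-paper proof to compare against. Your write-up is a legitimate self-contained proof of the discrete KKMS statement, and all three ingredients check out: the dictionary between balanced families and the barycenter ($b=\sum_j\mu_j m_{S_j}$ with $\lambda_j=\frac{n}{|S_j|}\mu_j$, and its converse via $\sum_j|S_j|\lambda_j=n$) is exact; the hypothesis $L(v)\subseteq\supp(v)$ does force the piecewise-linear map $f$ to carry each face $\sigma_S$ into itself, since the simplices of $T$ lying in $\sigma_S$ triangulate $\sigma_S$ and their vertices are sent to points of $\sigma_S$; and the degree argument (boundary restriction homotopic to the identity via the straight-line homotopy staying in each minimal face, hence not null-homotopic, hence $b$ cannot be missed) is the standard way to get surjectivity onto the barycenter. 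Your caveat that $L(v)\neq\emptyset$ must be assumed is right and is implicit in the paper's usage, since every player's preferred $k$-piece selection is nonempty. It is worth observing that your route is exactly in the spirit of the machinery the paper does use elsewhere: Theorem \ref{polytopal} (De Loera--Peterson--Su) is precisely a ``piecewise-linear map is surjective'' statement, and the paper's proof of Theorem \ref{dualPsperner} applies it by hitting the barycenter of $P(n,k)$ and reading off balancedness, which is your argument transplanted to a product of simplices. So your proof is essentially the specialization of that technique to a single simplex with set-valued labels replaced by face barycenters; what it buys is a proof of the cited Shapley result from first principles rather than by reference.
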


Combining Theorems \ref{discretekkms} and \ref{furedi} we obtain the following.
\begin{proposition}\label{kkmsmatchings} 
Suppose that the conditions of Theorem \ref{discretekkms} hold, and in addition $|L(v)| \le k$ for every $v\in V(T)$. Then there exists an elementary simplex $Q$ in $T$, and 
a collection $M$ of $\frac{n}{k^2-k+1}$ vertices of $Q$, such that the sets  $\{L(v) \mid v\in M\}$ are pairwise disjoint.  
\end{proposition}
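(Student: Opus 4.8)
The plan is to feed the elementary simplex produced by Shapley's theorem into the hypergraph machinery of Section~2. First I would apply Theorem~\ref{discretekkms} to the triangulation $T$ and the labeling $L$, obtaining an elementary simplex $Q$ for which the label set $E:=\{L(v)\mid v\in V(Q)\}$ is balanced with respect to $[n]$. By definition this means the hypergraph $H=([n],E)$ admits a perfect fractional matching. The hypotheses supply the two numerical facts I need about $H$: its vertex set has size $n$, and since $|L(v)|\le k$ for every $v$, its rank satisfies $\rank(H)\le k$.

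Next I would extract a lower bound on the fractional matching number. Because $H$ has a perfect fractional matching and rank at most $k$, Lemma~\ref{rank} gives $\nu^*(H)\ge \frac{|V|}{\rank(H)}=\frac{n}{\rank(H)}\ge \frac{n}{k}$. To convert this fractional estimate into an integral matching I would invoke F\"uredi's theorem (Theorem~\ref{furedi}): for a hypergraph of rank $r\ge 2$ one has $\nu(H)\ge \frac{\nu^*(H)}{r-1+1/r}$. Since $r-1+1/r$ is increasing in $r$ and $r\le k$, this yields
$$\nu(H)\ge \frac{\nu^*(H)}{k-1+\tfrac1k}=\frac{k\,\nu^*(H)}{k^2-k+1}\ge \frac{k\cdot(n/k)}{k^2-k+1}=\frac{n}{k^2-k+1}.$$
Finally, a matching in $H$ of this size is precisely a family of pairwise disjoint edges; selecting for each matched edge a vertex $v\in V(Q)$ realizing it as $L(v)$ produces a collection $M$ of at least $\frac{n}{k^2-k+1}$ vertices of $Q$ whose labels $\{L(v)\mid v\in M\}$ are pairwise disjoint, as claimed.

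The individual steps are each short applications of results already in hand, so the point requiring care is the bookkeeping with the rank: I must verify that replacing the true rank by the upper bound $k$ sends the inequalities in the correct direction in both Lemma~\ref{rank} (where a smaller rank only strengthens the bound on $\nu^*$) and in F\"uredi's theorem (where the denominator $r-1+1/r$ is monotone in $r$). The main obstacle is really just this monotonicity argument together with one degenerate case to dispose of separately, namely $\rank(H)\le 1$, where F\"uredi's hypothesis $r\ge 2$ fails. In that case every edge is a singleton, a perfect fractional matching forces all $n$ singletons $\{1\},\dots,\{n\}$ to appear with weight $1$, and these already constitute an integral matching of size $n\ge \frac{n}{k^2-k+1}$, so the conclusion holds a fortiori.
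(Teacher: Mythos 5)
Your argument is correct and follows essentially the same route as the paper's proof: apply Theorem \ref{discretekkms} to obtain a balanced label set on an elementary simplex $Q$, view it as a hypergraph on $[n]$ with a perfect fractional matching, bound $\nu^*$ from below via Lemma \ref{rank}, and convert to an integral matching via F\"uredi's theorem. Your extra care about the monotonicity of $r-1+\tfrac{1}{r}$ and the degenerate case $\rank(H)\le 1$ is a small refinement the paper leaves implicit, but it does not change the argument.
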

\begin{proof}
By Theorem \ref{discretekkms}, there exists an elementary simplex $Q$ in $T$ where the set 
$\T=\{L(v) \mid v\in V(Q)\}$ 
is a balanced set with respect to $[n]$. 
Therefore, the hypergraph $H=([n],\T)$ 
has a perfect fractional matching. Since every edge in $\T$ is of size at most $k$, by Lemma \ref{rank} we have that $\nu^*(H)\ge \frac{|V|}{k} = \frac{n}{k}.$ Therefore, by Theorem \ref{furedi}, $$\nu(H)\ge \frac{\nu^*(H)}{k-1+\frac{1}{k}} \ge \frac{n}{k^2-k+1}.$$
We conclude that $\T$ contains a matching $M'$ of size at least $\frac{n}{k^2-k+1}$, which corresponds to a subset $M$ of vertices of $Q$. 
\end{proof}

We are now ready to prove our first main result which we restate.
\medskip

\noindent {\bf Theorem \ref{main2}.} \emph{Suppose a cake is to be divided into $n$ pieces, 
and there are $p\le n$ hungry players who satisfy the following conditions
\begin{enumerate}
\item in any division of cake into $n$ pieces, each player finds some subset of $k$ pieces acceptable, and
\item player preference sets are closed: a piece that is acceptable for a convergent sequence of divisions will also be acceptable in the limiting division.
\end{enumerate}
Then there exists a division of the cake into $n$ pieces where at least $\lceil\frac{p}{2(k^2-k+1)}\rceil$ players prefer mutually disjoint sets of $k$ pieces.
Moreover, if $p$ divides $n$ then there exists a division of the cake into $n$ pieces where at least $\lceil\frac{p}{k^2-k+1}\rceil$ players prefer mutually disjoint sets of $k$ pieces.}

\begin{proof}
Let $S$ be the set of players with $|S|=p$. We duplicate each player $\lceil\frac{n}{p}\rceil$ times and choose a set $S'$ of size $n$ from the copies of players.
By Lemma \ref{complete} there exists a complete triangulation $T$ of $\Delta_{n-1}$. 
Let $o_v\in S'$ be the owner of $v\in V(T)$ in a complete assignment.

Every vertex $v\in V(T)$  corresponds to a division of the cake into $n$ pieces. 
Let the labeling $L(v)$ of $v$ be the preferred $k$-piece selection of $o_v$. 
Since $o_v$ is hungry, we have 
$L(v) \subseteq \text{supp}(v)$. 
Thus, by Theorem \ref{kkmsmatchings} there exists an elementary simplex $Q$ in $T$, with the following property: there exists a subset $M'$ of the vertices of $Q$ such that $|M'|\ge \lceil \frac{n}{k^2-k+1}\rceil$ and the set 
$\{L(v) \mid v\in M'\}$ 
consists of pairwise disjoint subsets of $[n]$ of size $k$ each, that represent pairwise disjoint $k$-piece selections  by $\lceil \frac{n}{k^2-k+1}\rceil$ players in $S'$. But since every player in $S$ is represented in $S'$ by at most $\lceil\frac{n}{p}\rceil$ copies and $p\le n$, there must be a set $M\subset M'$ representing pairwise disjoint $k$-piece selections by different players in $S$, with $$|M| \ge \frac{\lceil \frac{n}{k^2-k+1}\rceil}{\lceil\frac{n}{p}\rceil} \ge \frac{p}{2(k^2-k+1)},$$ and if $p$ divides $n$ then this improves to $|M| \ge \frac{p}{k^2-k+1}.$ 

Write $m = \lceil \frac{p}{k^2-k+1} \rceil$ if $p$ divides $n$, and $m= \lceil\frac{p}{2(k^2-k+1)} \rceil$ otherwise.
To show the existence of a single 
division of the cake that would satisfy at least $m$ players, carry out the procedure above for a sequence of finer and finer complete triangulations $T$. By compactness of $\Delta_{n-1}$ and decreasing size of the elementary simplices, there must exist a subsequence $\mathcal{Q}=Q_1,Q_2,\dots$ of elementary simplices converging to a single point, such that in each $Q_i$ there are at least $m$ players of $S$ that have pairwise disjoint $k$-piece selections. Since there are finitely many subsets of the player set $S$, and each player has a closed preference set, there must exists a subset $N\subset S$ of players of size $|N|=m$ for which the limit point of $\mathcal{Q}$ corresponds to a division of the cake 
in which the players in $N$ are satisfied with different sets of $k$ pieces.
\end{proof}

\section{Dividing multiple cakes}\label{sec:kcakes}
We now turn to the problem of dividing multiple cakes among a set of potential players.
 We will be dividing $k$ cakes into $n$ pieces each, and hence our space of all possible divisions will be realized by the $k(n-1)$-dimensional polytope $P(n,k)$ which is defined as the product of $k$ simplices of dimension $n-1$ each, that is, $$P(n,k) =  (\Delta_{n-1})^k = \Delta_{n-1} \times \dots \times \Delta_{n-1}.$$ Every face of $P(n,k)$ is given by $F(J_1,\dots,J_k)=\Delta^{J_1} \times \dots \times \Delta^{J_k}$ for some choice of subsets $J_1,\dots,J_k$ of $[n]$, where $\Delta^{S}$ is the face of $\Delta_{n-1}$ spanned by the vertices in $S\subset [n]$.

The KKMS theorem is a continuous version of Theorem \ref{discretekkms}. 
A theorem of Komiya \cite{komiya} implies the following generalization of the KKMS theorem to the polytope $P(n,k)$:
\begin{theorem}[Komiya \cite{komiya}]\label{komiyalite}  
Let $B_{(i_1,\dots,i_k)}$ be a closed subset of $P=P(n,k)$ for every $k$-tuple $(i_1,\dots,i_k) \in [n]^k$, such that for every $J_1,\dots, J_k \subseteq [n]$ we have $$F(J_1,\dots, J_k) \subseteq \bigcup \big\{B_{(i_1,\dots,i_k)}\mid i_j \in J_j \text{ for all } 1\le j\le k\big\}.$$
Then there exists a balanced collection of tuples  $\I=\big\{I^t= (i^t_1,\dots,i^t_k)
\in [n]^k \mid 1\le t\le m\big\}$ with respect to the  vertex set $V=V_1\sqcup\dots \sqcup V_k$, the disjoint union where 
$V_i=[n]$ for all $1\le i\le k$, such that $\bigcap_{t=1}^m B_{(i^t_1,\dots,i^t_n)} \neq \emptyset.$    
\end{theorem}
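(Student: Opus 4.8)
The plan is to deduce Theorem \ref{komiyalite} directly from Komiya's general polytope version of the KKMS theorem, instantiated at $P = P(n,k)$. In its general form, Komiya's theorem takes a polytope together with a closed set $C_F$ and a reference point $b_F \in F$ assigned to each face $F$, and assuming the KKMS covering condition $F \subseteq \bigcup_{G \subseteq F} C_G$ for every face $F$, it produces a family $\mathcal{B}$ of faces with $\bigcap_{F\in\mathcal{B}} C_F \neq \emptyset$ and with the reference point $b_P$ of the whole polytope lying in $\mathrm{conv}\{b_F : F \in \mathcal{B}\}$. First I would choose $b_F$ to be the barycenter of $F$ for every face of $P(n,k)$, put $C_v = B_{(i_1,\dots,i_k)}$ on each vertex $v=(i_1,\dots,i_k)\in[n]^k$, and set $C_F = \emptyset$ (which is closed) on every positive-dimensional face $F$.

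Next I would verify that the hypothesis of \ref{komiyalite} is exactly Komiya's covering condition under this assignment. The vertices of the face $F(J_1,\dots,J_k)$ are precisely the tuples $(i_1,\dots,i_k)$ with $i_j \in J_j$ for all $j$; since every positive-dimensional subface carries the empty set, $\bigcup_{G\subseteq F(J_1,\dots,J_k)} C_G$ collapses to $\bigcup\{B_{(i_1,\dots,i_k)} : i_j \in J_j \text{ for all } j\}$, which is exactly what the given inclusion requires to cover $F(J_1,\dots,J_k)$. Komiya then returns a balanced family $\mathcal{B}$ with nonempty intersection; because every positive-dimensional face carries $\emptyset$, such a family can consist only of vertices, so it corresponds to a set of tuples $\mathcal{I} = \{I^t = (i^t_1,\dots,i^t_k)\}_{t=1}^m \subseteq [n]^k$ with $\bigcap_t B_{I^t}\neq\emptyset$ and with the barycenter $b_P$ lying in the convex hull of the corresponding vertices of $P(n,k)$.

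The hard part is translating Komiya's geometric balancedness ($b_P$ in a convex hull of vertices) into the combinatorial balancedness with respect to $V = V_1\sqcup\dots\sqcup V_k$ that the statement demands. Here I would use that the barycenter of the product decomposes factorwise, $b_P = (\tfrac1n\sum_i e_i, \dots, \tfrac1n\sum_i e_i)$, and that the vertex $I^t$ is the point $(e_{i^t_1},\dots,e_{i^t_k})$. Writing $b_P = \sum_t \lambda_t (e_{i^t_1},\dots,e_{i^t_k})$ with $\lambda_t \ge 0$ and $\sum_t \lambda_t = 1$, and comparing coefficients in each factor $j$ using the affine independence of $e_1,\dots,e_n$, I obtain $\sum_{t:\, i^t_j = i}\lambda_t = \tfrac1n$ for every $i \in [n]$ and every $j \in [k]$ (summing over $i$ recovers $\sum_t \lambda_t = 1$, so the normalization is consistent across factors). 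Rescaling by $\mu_t = n\lambda_t$ gives $\sum_{t:\, i^t_j = i}\mu_t = 1$ for all $i,j$, i.e.\ $\mu$ is a perfect fractional matching of the $k$-partite hypergraph $(V,\mathcal{I})$ in which $V_j = [n]$ and each edge $I^t$ meets $V_j$ in the single vertex $i^t_j$. This is precisely the assertion that $\mathcal{I}$ is balanced with respect to $V$, which together with $\bigcap_t B_{I^t}\neq\emptyset$ completes the deduction. The only points needing care, beyond this balancedness dictionary, are that Komiya's reference points must lie in the relative interiors of the faces (barycenters qualify) and that the empty sets on higher faces are what forces the selected family to be supported on vertices.
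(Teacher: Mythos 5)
Your derivation is correct and is precisely the reduction the paper leaves implicit: Theorem \ref{komiyalite} is stated without proof as a consequence of Komiya's theorem, and your instantiation (empty closed sets on all positive-dimensional faces so that the covering condition collapses to the stated hypothesis and the output family is forced onto vertices, barycenters as reference points, and factorwise comparison of coefficients to convert $b_P \in \mathrm{conv}\{(e_{i^t_1},\dots,e_{i^t_k})\}$ into a perfect fractional matching of the $k$-partite hypergraph after rescaling by $n$) supplies exactly the missing details. One minor remark: Komiya's theorem only requires each reference point to lie in its face, not in its relative interior, so that caveat is unnecessary, though barycenters of course qualify.
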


Suppose that the conditions of Theorem \ref{komiyalite} hold.
Then by the theorem, we obtain a $k$-partite hypergraph $H$ on vertex set $V=V_1\sqcup\dots \sqcup V_k$ with an edge set $\I=\big\{I^t= (i^t_1,\dots,i^t_k)
\in [n]^k \mid 1\le t\le m\big\}$ that 
has a perfect fractional matching. Since every edge in $\I$ is of size $k$, by Lemma \ref{rank} we have that $\nu^*(H)\ge \frac{|V|}{k} = \frac{kn}{k}=n.$ Therefore, by Theorem \ref{furedi}  we have $\nu(H)\ge \frac{\nu^*(H)}{k-1} \ge \frac{n}{k-1}.$
We conclude that $\I$ contains a matching $M$ of size at least $\frac{n}{k-1}$ and $\bigcap_{I^t\in M} B_{I^t} \neq \emptyset.$

Thus we proved:
\begin{proposition}\label{komiyamatchings}
Suppose that the conditions of Theorem \ref{komiyalite} hold. Then there exists a pairwise disjoint collection of $k$-tuples $$\big\{I^t = (i^t_1,\dots,i^t_k) \in [n]^k\mid 1\le t\le m\big\}$$ such that $m\ge \frac{n}{k-1}$ and $\bigcap_{t=1}^m B_{I^t} \neq \emptyset.$    
\end{proposition}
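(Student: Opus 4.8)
The plan is to pipe the conclusion of Komiya's theorem directly through the hypergraph estimates of Section 2. First I would invoke Theorem \ref{komiyalite} under the stated hypotheses to obtain a balanced collection $\I=\{I^t=(i^t_1,\dots,i^t_k)\in[n]^k \mid 1\le t\le m\}$ with respect to $V=V_1\sqcup\dots\sqcup V_k$ (each $V_i=[n]$) for which $\bigcap_{t=1}^m B_{I^t}\neq\emptyset$. By the definition of balancedness, the hypergraph $H=(V,\I)$ carries a perfect fractional matching. Because each tuple $I^t$ meets every part $V_i$ in exactly one coordinate, every edge of $H$ has size exactly $k$, so $H$ is both $k$-uniform and $k$-partite.

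Next I would turn this fractional matching into an integral one of guaranteed size. Since $H$ has a perfect fractional matching and rank $k$, Lemma \ref{rank} gives $\nu^*(H)\ge |V|/k = kn/k = n$. I would then apply the $k$-partite case of F\"uredi's Theorem \ref{furedi}, which yields $\nu(H)\ge \nu^*(H)/(k-1)\ge n/(k-1)$. Hence $H$ contains an integral matching $M\subseteq\I$ with $|M|\ge n/(k-1)$; being a matching, $M$ is precisely a pairwise disjoint subcollection of the tuples in $\I$.

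Finally I would check that this subcollection retains the required intersection property. Since $M\subseteq\I$, intersecting over fewer sets only enlarges the intersection, so $\bigcap_{I^t\in M}B_{I^t}\supseteq \bigcap_{t=1}^m B_{I^t}\neq\emptyset$. Relabeling the elements of $M$ as $I^1,\dots,I^{m'}$ with $m'=|M|\ge n/(k-1)$ then delivers exactly the asserted disjoint family of $k$-tuples with nonempty common intersection.

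The only genuinely load-bearing step is the appeal to the $k$-partite case of F\"uredi's theorem: the generic rank-$k$ bound would give only $\nu(H)\ge \nu^*(H)/(k-1+\tfrac1k)$, whereas the $k$-partite structure—inherited from the product structure of $P(n,k)$ and the index set $[n]^k$—is what produces the clean denominator $k-1$ needed for the stated bound $n/(k-1)$. I expect no difficulty in the monotonicity-of-intersection step, which is immediate, nor in verifying that $H$ is $k$-uniform; the crux is simply recognizing that the balanced collection furnished by Komiya is a $k$-partite hypergraph, so that the stronger form of F\"uredi applies.
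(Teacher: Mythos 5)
Your proof is correct and follows essentially the same route as the paper: apply Theorem \ref{komiyalite} to get a balanced collection, view it as a $k$-uniform $k$-partite hypergraph with a perfect fractional matching, use Lemma \ref{rank} to get $\nu^*(H)\ge n$, and then the $k$-partite case of F\"uredi's theorem to extract an integral matching of size at least $n/(k-1)$ whose sets still have nonempty common intersection. You correctly identify the load-bearing step (the $k$-partite bound rather than the generic rank bound), exactly as the paper does.
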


Here is a discrete version of Proposition \ref{komiyamatchings}:

\begin{proposition}\label{komiyadiscrete} 
Let $T$ be a triangulation of $P(n,k)$ with labeling $\ell:V(T) \rightarrow [n]^k$ such that if $v=(v_1,\dots,v_{k})$, then 
$\ell(v) = (\ell_1(v),\dots,\ell_k(v))$ where $\ell_i(v) \in \supp(v_i)$.
Then there exists an elementary simplex $Q$ in $T$, with the following property: there exists a subset $M$ of $V(Q)$ such that $|M|\ge \frac{n}{k-1}$ and the set $\{\ell(v) \mid v\in M\}$ consists of pairwise disjoint $k$-tuples.  
\end{proposition}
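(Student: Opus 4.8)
The plan is to follow Proposition~\ref{komiyamatchings} line for line, replacing its appeal to the continuous Komiya theorem by a discrete balanced-simplex statement and then feeding the output into the identical hypergraph machinery. Concretely, I would first produce a \emph{single} elementary simplex $Q$ of $T$ whose label set $\Lambda(Q)=\{\ell(v)\mid v\in V(Q)\}$ is balanced with respect to the $k$-partite vertex set $V=V_1\sqcup\dots\sqcup V_k$, with $V_i=[n]$, and then extract the matching from $Q$ by the Lemma~\ref{rank}--Füredi argument. The relevant hypergraph is $H=(V,\Lambda(Q))$; since each label $\ell(v)=(\ell_1(v),\dots,\ell_k(v))$ picks exactly one element of each $V_i$, the hypergraph $H$ is $k$-partite with all edges of size $k$.

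To obtain such a $Q$ I would establish the discrete analogue of Komiya's Theorem~\ref{komiyalite} on $P(n,k)$, i.e.\ the product generalization of Shapley's Theorem~\ref{discretekkms}. The natural route is the closed-cover reduction: for each tuple $c\in[n]^k$ set $B_c=\bigcup\{\,R\mid R \text{ an elementary simplex of } T,\ c\in\Lambda(R)\,\}$, which is closed. One then checks the hypothesis of Theorem~\ref{komiyalite} from the factorwise-Sperner condition $\ell_i(v)\in\supp(v_i)$ together with the product face structure $F(J_1,\dots,J_k)$: a point $x$ in that face lies in a carrier simplex $\sigma$ of $T$ whose vertices $v$ all satisfy $\supp(v_i)\subseteq\supp(x_i)\subseteq J_i$, hence $\ell_i(v)\in J_i$ for every $i$; taking any elementary simplex $R\supseteq\sigma$ places $x$ in $B_c$ with $c=\ell(v)$ and $c_i\in J_i$. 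Theorem~\ref{komiyalite} then yields a balanced collection $\I$ of tuples with a common point $y\in\bigcap_{c\in\I}B_c$.

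The main obstacle is the final localization step. A priori the common point $y$ witnesses each label $c\in\I$ through a \emph{possibly different} elementary simplex containing $y$, whereas the conclusion demands that all matched labels sit on one simplex $Q$. This is precisely the single-simplex phenomenon that makes Shapley's Theorem~\ref{discretekkms} a genuinely combinatorial result rather than a formal corollary of the topological KKMS theorem, and I expect the same here. I would resolve it by importing the combinatorial core of Komiya's argument---a complementary-pivoting / degree argument carried out factorwise on the triangulation $T$ of $P(n,k)$---to force the balanced label collection onto a single elementary simplex $Q$; this is where the real work lies, and everything else is bookkeeping.

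Granting such a $Q$, the remainder is routine and mirrors Proposition~\ref{komiyamatchings} verbatim. Since $\Lambda(Q)$ is balanced, $H=(V,\Lambda(Q))$ has a perfect fractional matching, and as every edge has size $k$, Lemma~\ref{rank} gives $\nu^*(H)\ge |V|/k = kn/k = n$. Because $H$ is $k$-partite, Füredi's Theorem~\ref{furedi} yields $\nu(H)\ge \nu^*(H)/(k-1)\ge n/(k-1)$. A maximum matching $M'$ in $H$ then consists of at least $n/(k-1)$ pairwise disjoint $k$-tuples; choosing, for each matched tuple, one vertex of $Q$ carrying it produces a subset $M\subseteq V(Q)$ with $|M|=|M'|\ge n/(k-1)$ for which $\{\ell(v)\mid v\in M\}$ is a family of pairwise disjoint $k$-tuples, as required.
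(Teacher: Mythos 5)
Your overall architecture is right, and your verification of Komiya's covering hypothesis is essentially the argument the paper leaves implicit; but the proof has a genuine gap exactly where you locate it, namely the localization of the balanced label collection onto a single elementary simplex. With your sets $B_c=\bigcup\{R \mid R \text{ elementary in } T,\ c\in\Lambda(R)\}$, a common point $y\in\bigcap_{c\in\I}B_c$ only tells you that each label $c$ appears on \emph{some} elementary simplex containing the carrier of $y$; these simplices form the star of that carrier and need not coincide, so no single $Q$ is produced. You then defer the repair to an unexecuted ``complementary pivoting / degree argument,'' which is both the hard road and not the one the paper takes.

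The paper closes the gap with a standard and much lighter device: pass to the first barycentric subdivision $T'$ of $T$, and for each $c\in[n]^k$ let $B_c$ be the union of the elementary simplices of $T'$ that meet the set of vertices of $T$ labeled $c$ (equivalently, the union of the closed stars in $T'$ of those vertices). These smaller closed sets still satisfy the hypothesis of Theorem \ref{komiyalite}: your covering argument goes through essentially verbatim, since a point $x$ lies in the closed $T'$-star of every vertex of the minimal face of $T$ whose subdivision contains the carrier of $x$, and every such vertex $v$ has $\supp(v_i)\subseteq\supp(x_i)\subseteq J_i$, hence $\ell_i(v)\in J_i$. But now they also have the key nerve property: a point lies in the closed $T'$-stars of several vertices of $T$ only if those vertices span a common face of $T$. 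Hence any $y\in\bigcap_{t}B_{I^t}$ produces, for each $t$, a vertex $v^t$ with $\ell(v^t)=I^t$, all lying on one face of $T$; extending that face to an elementary simplex $Q$ gives exactly the single simplex you need, after which your Lemma \ref{rank}--F\"uredi extraction (which matches Proposition \ref{komiyamatchings}) is correct as written. Incidentally, this same subdivision trick is how the discrete Shapley statement is deduced from the topological KKMS theorem, so the ``genuinely combinatorial'' obstruction you anticipate does not actually arise.
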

\begin{proof}
Let $T'$ be the first barycentric subdivision of $T$. For every $k$-tuple $I=(i_1,\dots,i_k) \in [n]^k$, we set 
$V_I = \{v\in T \mid L(v) = I\}$ 
and let 
$B_I$ be the union of all elementary simplices $S$ of $T'$ with $V(S)\cap V_I \neq \emptyset$. Then the sets $B_I$ satisfy the conditions of Proposition \ref{komiyamatchings}, so considering the conclusion, every point $x \in \bigcap_{t=1}^m B_{I^t}$ lies in simplex of $T$ that contains $M$ in its vertex set.  
\end{proof}

We are now ready to prove our second main theorem:
\medskip

\noindent {\bf Theorem \ref{main1}} \emph{
Suppose that there are $k$ cakes and
$p\le k(n-1)+1$ hungry players, each of whom finds at least one $k$-piece selection acceptable in any division of the cakes into $n$ pieces each. If all player preference sets are closed then there exists a division of the $k$ cakes into $n$ pieces each such that at least $\lceil\frac{p}{2k(k-1)}\rceil$ players prefer mutually disjoint $k$-piece selections.  Moreover, if $p$ divides $k(n-1)+1$ then there exists a division where at least $\lceil\frac{p}{k(k-1)}\rceil$ players prefer mutually disjoint $k$-piece selections. }

\begin{proof} 
Let $S$ be the set of players with $|S|=p$. As in the proof of Theorem \ref{main2}, we duplicate each player $\lceil\frac{k(n-1)+1}{p}\rceil$ times and choose a set $S'$ of size $k(n-1)+1$ from the copies of the players.
By Lemma \ref{complete}, there exists a complete triangulation $T$ of the  polytope $P=P(n,k)$. Let $o_v\in S'$ be the owner of $v\in V(T)$ in a complete assignment. 

Every vertex $v=(v_1,\dots,v_k) \in V(T)$ ($v_i \in \Delta_{n-1}$) corresponds to a division of the $k$ cakes into $n$ pieces each.
Define a labelling $\ell:V(T) \rightarrow [n]^k$ by setting $\ell(v)$ to be the $k$-tuple representing the $k$-piece selection, one from each cake, that the player $o_v$ prefers. 
Since $o_v$ is hungry, we have 
$\ell_i(v)\in \supp(v_i)$ for each $i \in [k]$.
Thus, by Proposition \ref{komiyadiscrete}, there exists an elementary simplex $Q$ in $T$, with the following property: there exists a subset $M'$ of the vertices of $Q$ such that $|M'|\ge \lceil \frac{n}{k-1}\rceil$ and the set 
$\{\ell(v) \mid v\in M'\}$ 
consists of pairwise disjoint $k$-tuples, that represent pairwise disjoint $k$-piece selections  
of $\lceil \frac{n}{k-1}\rceil$ players in $S'$. 

Now, since every player in $S$ is represented in $S'$ by at most $\lceil\frac{k(n-1)+1}{p}\rceil$ copies, there must be a set $M\subset M'$ of vertices in $Q$ owned by disjoint players in $S$, with
$$|M| \ge \frac{\lceil \frac{n}{k-1}\rceil}{\lceil\frac{k(n-1)+1}{p}\rceil} \ge \frac{p}{2k(k-1)},$$
and if $p$ divides $k(n-1)+1$ then $|M| \ge  \frac{p}{k(k-1)}.$ 

A similar argument to the one made in the proof of Theorem \ref{main2} now shows the existence of a single cut of the $k$ cakes into $n$ pieces each that would satisfy at least $\frac{p}{2(k^2-k)}$ players, or $\frac{p}{k(k-1)}$ players in the case $p$ divides $k(n-1)+1$. 
\end{proof}

We note that the techniques in this proof are similar to those used by Lebert, et. al. \cite{Meunier} to prove that if there are only two players then, for large enough $n$, there exists a division of the $k$ cakes into $n$ pieces each in which both players prefer pairwise disjoint $k$ pieces, one from each cake.


\section{Assigning Shifts to Players}\label{sec:shifts}

Our last allocation problem involves an employer who wishes to assign all shifts on a set of days to a collection of employees. We look for a small set of employees who can cover all of the shifts. This would be a useful consideration if, for example, the employees were all receiving fixed salaries, and so the employer wishes to spend the least amount of money to cover all of the shifts. Moreover, it is natural to assume (and we will make this assumption) that 
if the salaries are fixed 
then every employee prefers an empty shift (a shift that requires no time at work), if one is available. 

We also assume that if there are multiple empty shifts, then players are indifferent between them.  This is actually a consequence of having closed preference sets, since the division with multiple empty shifts is the limit of divisions in which a single fixed room is free.

We call a choice of one shift on each of $k$ days a \emph{$k$-shift selection}. 

We note that in a typical solution to our problem, more employees are needed than there are shifts each day. This indicates that some shifts will be covered by more than one employee, or some employees can be sent home early.

Consider a polytope $P$ with a triangulation $T$. A labeling of $T$ is said to be {\em Sperner} if the vertices of $P$ have distinct labels, and vertices of $T$ that lie on a minimal face $F$ of $P$ have labels chosen from the labels of the vertices of $P$ spanning the face $F$.
A simplex of $T$ is said to be a {\em full cell} if its vertices have distinct labels.
We will require the following result from \cite{Polytopal} which comes from a generalization of Sperner's Lemma to polytopes. 
\begin{theorem}[DeLoera-Peterson-Su \cite{Polytopal}] 
\label{polytopal}
Let $P$ be an $d$-dimensional polytope with $n$ vertices together with a Sperner-labeled triangulation $T$. Let $f: P \rightarrow P$ be the piecewise-linear map that takes each vertex of $T$ to the vertex of $P$ that shares the same label, and is linear on elementary simplex of $T$. Then the map $f$ is surjective, and thus the collection of full cells in $T$ forms a cover of $P$ under $f$.
\end{theorem}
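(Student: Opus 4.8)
The plan is to prove surjectivity of $f$ by a topological degree argument and then read off the covering statement as an immediate consequence. First I would verify that $f$ is well defined and continuous: the Sperner condition forces distinct vertices of $P$ to carry distinct labels, so ``the vertex of $P$ sharing the label of $v$'' is unambiguous for each $v\in V(T)$, and the linear extensions over the elementary simplices agree on shared faces, producing a continuous piecewise-linear map $f\colon P\to P$. The combinatorial heart of the hypothesis is that $f$ respects the face lattice of $P$, in the sense that $f(F)\subseteq F$ for every face $F$. Indeed, if $v\in V(T)$ lies on $F$, then the minimal face of $P$ containing $v$ is itself contained in $F$, so by the Sperner condition $\ell(v)$ is the label of a vertex spanning that minimal face and hence $f(v)\in F$; since $F$ is convex and $f$ is linear on cells, $f(F)\subseteq F$. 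In particular $f(\partial P)\subseteq\partial P$.

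Next I would compute the degree of $f$ as a map of pairs $(P,\partial P)\to(P,\partial P)$ and show it equals $\pm 1$, which yields surjectivity onto $\mathrm{int}(P)$. Via the long exact sequence of the pair, $\deg f$ equals the degree of the boundary map $f|_{\partial P}\colon\partial P\to\partial P$, where $\partial P\cong S^{d-1}$. To evaluate the latter I would induct on $d$. Pick a generic point $q$ in the relative interior of a facet $F$; because $f$ sends every facet $F'$ into $F'$ and $\mathrm{int}(F)$ meets no other facet, all preimages of $q$ on $\partial P$ lie in $F$, so the local degree of $f|_{\partial P}$ at $q$ is exactly the degree of $f|_F\colon F\to F$. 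The labeling that $T$ induces on $F$ is again Sperner (its vertices carry labels of the $P$-vertices spanning $F$), so by the induction hypothesis $f|_F$ has degree $\pm 1$; the base case $d=0$ is the identity on a vertex. Hence $\deg(f|_{\partial P})=\pm 1$ and $\deg f=\pm 1$, so every interior point of $P$ lies in the image and, by compactness, $f$ is surjective. (Equivalently, if $f$ missed an interior point $p$, composing with the retraction $P\setminus\{p\}\to\partial P$ would extend $f|_{\partial P}$ across the disk $P$, contradicting that its degree is nonzero.)

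Finally, the covering statement follows directly. For $q\in P$ choose a preimage, lying in some elementary simplex $Q$; if $Q$ were not a full cell then two of its vertices would share a label, so the $d+1$ images of its vertices would span an affine subspace of dimension at most $d-1$, forcing $f(Q)$ into a set of measure zero. Thus the union $N$ of the images of non-full cells has measure zero, while the union $C$ of the images of full cells is closed (a finite union of compact sets). Since $P=C\cup N$ and $N$ is null, $C$ is dense; being also closed, $C=P$, i.e.\ the full cells cover all of $P$ under $f$. I expect the main obstacle to be the degree step of the second paragraph, namely the inductive identification of $\deg(f|_{\partial P})$ with the lower-dimensional face data and the accompanying PL transversality/genericity bookkeeping, since this is precisely where the Sperner hypothesis is converted into surjectivity; the well-definedness in the first paragraph and the measure-theoretic covering deduction in the third are comparatively routine.
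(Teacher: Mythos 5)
Your argument is correct, but note that the paper itself does not prove this statement --- it is imported verbatim from De Loera--Peterson--Su \cite{Polytopal} --- so the comparison is with the source rather than with anything in this paper. Your first paragraph (well-definedness, and $f(F)\subseteq F$ for every face $F$, which really uses that the simplices of $T$ lying in $F$ triangulate $F$, so every point of $F$ is a convex combination of $T$-vertices on $F$) and your third paragraph (non-full cells have at most $d$ distinct vertex images, hence degenerate, null image; the union of images of full cells is closed and co-null, hence all of $P$) match the standard treatment. The difference is the middle step. Once you know $f$ maps each face of $P$ into itself, you do not need the inductive local-degree computation over a generic point of a facet: the straight-line homotopy $H(x,t)=(1-t)x+tf(x)$ stays inside the facet containing $x$ (that facet is convex and contains both endpoints), hence inside $\partial P$, so $f|_{\partial P}$ is homotopic to the identity of $\partial P\cong S^{d-1}$ and has degree exactly $1$, not merely $\pm 1$; surjectivity then follows from the retraction argument you already give. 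This is the route in \cite{Polytopal}. Your induction is also valid --- the points you would need to nail down are that every preimage of a generic $q$ in the relative interior of $F$ lies in the relative interior of $F$ (a preimage on a proper face $G$ of $F$ has image in $G$ and so misses $q$), and the identification of the rel-boundary degree of $f|_F$ with $\deg(f|_{\partial F})$ via the exact sequence of the pair --- but it costs you exactly the PL transversality bookkeeping you flag as the main obstacle, all of which the homotopy argument avoids.
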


Each point in the product of simplices $P(n,k)$ represents a division of $k$ days into $n$ shifts each. Let $T$ be a triangulation of $P(n,k)$.  We shall label every vertex $v =(v_1,\dots,v_k) \in V(T)$ (where $v_i \in \Delta_{n-1}$) with a $k$-tuple 
$\ell(v) = (\ell_1(v),\dots,\ell_k(v))$ in $[n]^k$
that will represent a desirable shift selection for some player.

We now define a labeling condition to describe the fact that players prefer empty shifts when available. For any 
$x \in \Delta_{n-1}$, let 
$\free(x) = [n] \setminus \supp(x)$.
We say a labeling
$\ell: V(T) \rightarrow [n]^k$ 
is \emph{factorwise-dual-Sperner} if 
whenever $\free(v_i) \neq \emptyset$, 
we have $\ell_i(v) \in \free(v_i)$.
Since players are indifferent between empty shifts when there are more than one, we define also a notion of equivalent labelings that allow for switching preferences between empty shifts.
We say two factorwise-dual-Sperner labelings 
$\ell, \ell': V(T) \rightarrow [n]^k$ 
are \emph{equivalent} if 
whenever $\free(v_i)= \emptyset$, we have $\ell_i(v)=\ell'_i(v)$. 

For the proof of Theorem \ref{main3} we need a dual-Sperner-type result for $P(n,k)$. This is Theorem \ref{dualPsperner} that we restate and prove here.
For its proof we employ a technique used by Frick, et. al. \cite{SecretHousemate} to give a Sperner labeling in their rental division problem with a secretive housemate.
\medskip

\noindent {\bf Theorem \ref{dualPsperner}} \emph{
If $T$ is a triangulation of $P(n,k)$ with a factorwise-dual-Sperner labeling $\ell$, then it has an equivalent labeling $\ell'$ with an elementary simplex $Q$ in $T$, such that the set of labels 
$\Lambda(Q)=\{\ell'(v) \mid v\in V(Q)\}$ 
is balanced with respect to $V=V_1\sqcup \dots \sqcup V_k$, where 
$V_i=[n]$ for all $1\le i\le k$. 
}
\begin{proof}
We will construct an equivalent Sperner labeling $\ell'$ of $T$.
First for any $J \subset [n]$, let $J+1$ denote the set $\{ i+1 \mid i \in J\}$, where for the purposes of this definition we set $n+1=1$, so that $J+1$ cycles the indices in $J$ by shifting each by $1$.
Note that if $J\neq [n]$, then $(J+1) \setminus J \neq \emptyset$.

Given $v \in V(T)$, set $\ell'_i(v) = \ell_i(v)$ for all $i$ such that $\free(v_i)=\emptyset$. For all other $i$, set $J_i=\supp(v_i)$ and set $\ell'_i(v) = j_i+1$ for some $j_i+1 \in (J_i+1) \setminus J_i$.
This choice clearly satisfies $\ell'_i(v)\in \free(v_i)$.  Then $\ell'$ is a factorwise-dual-Sperner labeling that is equivalent to $\ell$.

We claim that $\ell'$ is a Sperner labeling of $T$. To prove this, we have to show that (a) the vertices of $P(n,k)$ receive pairwise distinct labels in $\ell'$, and (b) the label $\ell'(v)$ of  $v\in V(T)$ matches one of the labels of the vertices of $\supp(v)$ in $P(n,k)$. To see (a), observe that for any vertex $w$ of $P(n,k)$ and any $i\in [k]$, the set $J_i=\supp(w_i)$ is a singleton, and any two vertices of $P(n,k)$ will differ in $J_i$ for at least one $i$. For (b), note that a point $v\in P(n,k)$ is on a face spanned by a vertex $w$ if and only if $\supp(w_i) \subset \supp(v_i)$ for all $i$.
Now, given $v \in V(T)$ let $w=(w_1,\dots,w_k)$ be the vertex of $P(n,k)$ where $\supp(w_i)=\{\ell'_i(v)-1\}$ for each $i$. Then by our definition of $\ell'$ we have  $\ell'_i(w)=(\ell'_i(v)-1)+1=\ell'_i(v)$ and $\ell_i'(v)-1 \in \supp(v_i)$. Thus, $v$ and $w$ have the same label and $v$ lies on a face spanned by $w$.

Thus, by Theorem \ref{polytopal}, there is an elementary simplex $Q$ in $T$ whose image under the piecewise linear map $f$ of Theorem \ref{polytopal} contains the barycenter of $P(n,k)$. The labels of the vertices of $Q$ therefore form a balanced set with respect to the vertex set $V$. 
\end{proof}

Combining Theorem \ref{dualPsperner} with Corollary \ref{lovaszcor} we have: 
\begin{proposition}\label{dualkomiyacover}
Let $T$ be a complete triangulation of $P(n,k)$ with 
factorwise-dual-Sperner labeling $\ell:V(T)\rightarrow [n]^k$. 
Then there exists an equivalent labeling $\ell'$, 
an elementary simplex $Q$ in $T$, and a subset $U \subset V(Q)$ of size:
 \begin{itemize}
  \item $|U|= n$ when $k=2$, or
  \item $|U|\le n(1+\ln k)$ when $k \geq 2$,
 \end{itemize}
such that the set $\Lambda(U)=\{ \ell'(u)=(\ell'_1(u),\dots, \ell'_k(u)) \mid u\in U\}$ has the property that for every $i \in [n]$ and $j\in [k]$ there exists $u\in U$ for which $\ell'_j (u)=i$. 
\end{proposition}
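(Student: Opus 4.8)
The plan is to reinterpret the desired conclusion as a statement about edge covers of a hypergraph, and then to dualize so that the $\tau/\tau^*$ bounds of Lov\'asz and Gallai apply. First I would invoke Theorem \ref{dualPsperner} to obtain an equivalent labeling $\ell'$ and an elementary simplex $Q$ for which $\Lambda(Q)=\{\ell'(v)\mid v\in V(Q)\}$ is balanced with respect to $V=V_1\sqcup\dots\sqcup V_k$. By definition this means the $k$-partite hypergraph $H=(V,\Lambda(Q))$ admits a perfect fractional matching $f$. Now observe that the requested property of $\Lambda(U)$---that for every $i\in[n]$ and $j\in[k]$ some $u\in U$ has $\ell'_j(u)=i$---says precisely that the edges $\{\ell'(u)\mid u\in U\}$ form an \emph{edge cover} of $H$, i.e. every vertex of $V$ lies in one of these edges. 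So it suffices to produce a small edge cover of $H$ and then select one representative vertex $u\in V(Q)$ carrying each edge of that cover.

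The heart of the argument is bounding the minimum size of such an edge cover. A minimum edge cover of $H$ is exactly a minimum vertex cover of the dual hypergraph $H^D$, whose size is $\tau(H^D)$: in $H^D$ the vertices are the edges of $H$, and the edge of $H^D$ corresponding to $v\in V$ consists of all edges of $H$ through $v$, so a vertex cover of $H^D$ meets every such edge, i.e. covers every $v\in V$. To control $\tau(H^D)$ I would first bound $\tau^*(H^D)$. The key observation is that the perfect fractional matching $f$ of $H$, viewed as a weight function on the vertices of $H^D$, is a \emph{fractional cover} of $H^D$: the edge of $H^D$ at $v$ receives total weight $\sum_{e\ni v}f(e)=1\ge 1$. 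Since $H$ is $k$-uniform with $|V|=kn$, double counting gives $k\sum_{e}f(e)=\sum_{v\in V}\sum_{e\ni v}f(e)=|V|=kn$, so $\sum_e f(e)=n$ and hence $\tau^*(H^D)\le n$.

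With this in hand the two cases follow from the hypergraph theorems of Section 2. Since $H$ is $k$-uniform, Corollary \ref{lovaszcor} gives $\tau(H^D)\le(1+\ln k)\,\tau^*(H^D)\le n(1+\ln k)$ for $k\ge 2$, producing an edge cover of the required size. When $k=2$ the hypergraph $H$ is a bipartite graph, so Gallai's Theorem \ref{bipgallai} gives $\tau(H^D)=\tau^*(H^D)\le n$; since every edge covers exactly two of the $2n$ vertices of $V$, at least $n$ edges are needed, so in fact $\tau(H^D)=n$ and the minimum edge cover is a perfect matching of size $n$. In either case, choosing for each edge of the cover a vertex of $Q$ carrying that label yields the set $U\subset V(Q)$ with $|U|=n$ (resp. $|U|\le n(1+\ln k)$) and the stated covering property.

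I expect the main obstacle to be the bookkeeping of the dual hypergraph---correctly identifying the desired covering property as an edge cover of $H$, and hence a vertex cover of $H^D$, and verifying that the perfect fractional matching of $H$ is a legitimate fractional cover of $H^D$ of total weight exactly $n$. Once the dualization is set up correctly, the quantitative bounds are immediate consequences of Corollary \ref{lovaszcor} and Theorem \ref{bipgallai}, and the translation back from labels to vertices of $Q$ is routine.
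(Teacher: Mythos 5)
Your proposal is correct and follows essentially the same route as the paper: invoke Theorem \ref{dualPsperner} to get a balanced label set on an elementary simplex, pass to the dual hypergraph where the perfect fractional matching becomes a perfect fractional cover with $\tau^*(H^D)=n$, and apply Corollary \ref{lovaszcor} (resp.\ Theorem \ref{bipgallai} when $k=2$) to bound the integral cover. The only cosmetic difference is that you compute $\sum_e f(e)=n$ by direct double counting rather than citing Lemma \ref{rank}.
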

\begin{proof}
Given $\ell$, by Theorem \ref{dualPsperner} there exist
an equivalent Sperner labeling $\ell'$ and an elementary simplex $Q$ in $T$ such that the set of labels 
$\Lambda(Q)=\{ \ell'(v) \mid v\in V(Q)\}$ 
is balanced with respect to $V=V_1\sqcup\dots \sqcup V_k$. Therefore, 
the hypergraph $H=(V,\Lambda(Q))$ has a perfect fractional matching $f:\Lambda(Q)\to [0,1]$. Since $H$ is $k$-uniform, we have that $\nu^*(H) = kn/k = n$.   

Equivalently, in the dual hypergraph $H^D$, $f$ is a perfect fractional cover, and $\tau^*(H^D) = n$. By Corollary \ref{lovaszcor}, we obtain $\tau(H^D)\le (1+\ln k)\tau^*(H^D) = (1+\ln k)n$, and if $k=2$, then by Theorem \ref{bipgallai}, $\tau(H^D)= \tau^*(H^D) = n$. Thus the hypergraph $H^D$ on vertex set $\Lambda(Q)$ and edges $V$ has a cover of size $t$ where $t\le (1+\ln k)n$, and moreover, $t=n$ when $k=2$. By the definition of $H^D$, this means that there exists a set of vertices $U\subset V(Q)$ of size $|U|=t$ such that the set of labelings of $U$ cover each $v\in V$. That is,    
 for every $j\in [k]$ and $i \in V_j=[n]$ there exists $u\in U$ for which $\ell'_j(u)=i$.    
\end{proof}

We now prove our last main theorem. 
\medskip

\noindent {\bf Theorem \ref{main3}} \emph{
Suppose that a set of $p\geq k(n-1)+1$ employees satisfies the following conditions: 
\begin{enumerate}
\item For any partition of $k$ days into $n$ shift each, every employee finds at least one $k$-shift selection acceptable. 
\item The employees prefer empty shifts, if available. 
\item All employee preference sets are closed.   
\end{enumerate}
Then there exists a partition of $k$ days into $n$ shifts each for which a subset of at most $n(1+\ln k)$ employees cover all the $kn$ shifts. Moreover, if $k=2$ there exists a subset of $n$ employees that cover all the $2n$ shifts. } 

\begin{proof}
Fix a subset $S$ of $k(n-1)+1$ players. 
By Lemma \ref{complete}, there exists a complete triangulation $T$ of $P=P(n,k)$. Let $o_v\in S$ be the owner of $v\in V(T)$ in a complete assignment. 

Every vertex $v=(v_1,\dots,v_k)\in V(T)$  corresponds to an partition of the $k$ days into $n$ shifts each. Let the labeling 
$\ell(v)$
be the shift selection chosen by $o_v$: one shift on each day. 
Since $o_v$ prefers empty shifts, for each $j$ we have $\ell_j(v) \in \free(v_j)$ whenever $\free(v_j) \neq \emptyset$. When more than one empty shift exists on a particular day, we can reassign the label for that day if needed, as in the proof of Theorem \ref{dualPsperner}, to obtain a factorwise-dual-Sperner labeling $\ell'$. Thus, by Proposition \ref{dualkomiyacover}, there exists an elementary simplex $Q$ in $T$
and  $U \subset V(Q)$ of size  $|U|\le n(1+\ln k)$ (or $|U|= n$ if $k=2$), such that the set $\{\ell'(u) \mid u\in U\}$ has the property that for every $i \in [n]$ and $j\in [k]$ there exists $u\in U$ for which $\ell'_j(u)$ is $i$. This corresponds to a cover of the shifts.   

A similar argument to the one made in the proof of Theorem \ref{main1} now shows the existence of a single partition of the days into shifts in which the $k$-shift selection of at most of $n(1+\ln k)$ employees (or $n$ employees if $k=2$) cover every shift.
\end{proof}

\section{Acknowledgments}
We thank Florian Frick for valuable conversations. This work was completed while the authors were in residence at at the Mathematical Sciences Research Institute in Berkeley, California, during the Fall 2017 semester where they were supported by the National Science Foundation under Grant No. DMS-1440140.

\end{document}